\DeclareTextSymbol{\cyrsftsn}{OT2}{126}
\DeclareTextSymbol{\textnumero}{OT2}{125}
\theoremstyle{definition}
\newtheorem{theorem}{Theorem}[section]
\newtheorem{lemma}{Lemma}[section]
\newtheorem{definition}{Definition}[section]
\newtheorem{remark}{Remark}[section]
\newtheorem{example}{Example}[section]
\begin{document}
\title{{\LARGE\bf{Global Mittag-Leffler stability of Fractional-Order Projection Neural Networks with Impulses}}\thanks{This work was supported by the National Natural Science Foundation of China (11901273, 11471230, 11671282) and Application Fundamentals Foundation of Science and Technology Department of Sichuan (No. 2020YJ0366) and the Key Project in Universities of Henan Province (No. 19A110025).}}
\author{ Jin-dong Li$^{a,b}$, Zeng-bao Wu$^{c}$, Nan-jing Huang$^b$\thanks{Corresponding author. E-mail addresses: nanjinghuang@hotmail.com; njhuang@scu.edu.cn} \\
{\small\it $^a$College of Management Science, Chengdu University of Technology, Chengdu, Sichuan 610059, P.R. China}\\
{\small\it $^b$Department of Mathematics, Sichuan University, Chengdu, Sichuan 610064, P.R. China}\\
{\small\it $^c$Department of Mathematics, Luoyang Normal University, Luoyang, Henan  471934, P.R. China}}
\date{ }
\date{ }
\maketitle
\begin{flushleft}
\hrulefill\\
\end{flushleft}
 {\bf Abstract}.
This paper is about the study of a new class of  fractional-order projection neural networks with impulses which capture the desired features of both the variational inequality and the fractional-order impulsive dynamical systems within the same framework. We obtain  the existence and boundedness of solutions for such fractional-order projection neural networks under mild conditions. Moreover, we give some sufficient conditions for ensuring the global Mittag-Leffler stability of the equilibrium point for such fractional-order projection neural networks by utilizing a general quadratic Lyapunov function. Finally, we provide two numerical examples to illustrate the validity and feasibility of the main results.
 \\ \ \\
{\bf Keywords:} Fractional-order projective neural network; Impulsive effect; Equilibrium point; Mittag-Leffler stability.
\begin{flushleft}
\hrulefill
\end{flushleft}

\section{Introduction} \noindent

As an important generalization of integral calculus, fractional calculus deals with the study of so-called fractional order integral and derivative operators over real or complex domains, which has been extensively used to model many practical problems arising in physics and mechanics,  biology and chemistry, economy and finance, science and engineering and so on \cite{IP,Butzer,Kilbas,Die,Sun,Zhou,Ahmed,Song,Magin}. On the other hand, neural networks have attracted wide attention owing to their applications in various fields such as robot, aerospace joint memory, pattern recognition, signal processing, and automatic control engineering. The practical application of neural networks depends on the development of related theories of neural networks, such as the study of the existence and uniqueness of solutions and the stability of neural networks.  Recently, the stability of fractional-order neural networks has drawn much attention such as Mittag-leffler stability \cite{Wu,Yang,Syed1,Meng}, asymptotic stability \cite{Liu,LB,CDLi}, and finite-time stability \cite{You,Feng,Wang,Chen}.

It is well known that, the projection neural network (dynamical system), captured the desired features of both the variational inequality and
the dynamical systems within the same framework,  can be used to solve many constrained optimization problems, variational inequality problems,  equilibrium point problems, dynamic traffic networks and so on (see, for example, \cite{Fr,CHT,WZH2,WZ,WLH,DH,LHuang,WCH,LWHuang,Ha,Li,WZH1,XV,Xia,WZHuang} and the references therein).
Taking into account of the advantages of fractional calculus, Wu and Zou \cite{WZ}, for the first time, proposed a class of fractional order projective dynamical systems. Wu et al.\cite{WLH} proposed a new system of global fractional-order interval implicit projection neural networks and showed Mittag-Leffler stability of the equilibrium point for such fractional-order projection neural networks under suitable conditions. On the basic of the linear matrix inequality technique, Li et al. \cite{LWHuang} obtained some sufficient conditions to ensure the asymptotical stability of the equilibrium point of the addressed projection neural networks. Wu et al. \cite{WZHuang} investigated a new class of global fractional-order projection dynamical system with delay and obtained existence and uniqueness of solutions for considered dynamical system  by using the Krasnoselskii fixed point theorem.

It is worth mentioning that, in the real world, many systems are often disturbed suddenly, and systems changes suddenly in a short time. This phenomenon is called as impulse. Recently, various theoretical results, numerical algorithms with applications have been studied extensively for the fractional-order neural networks with impulses under different conditions in the literature; for instance, we refer the reader to \cite{JRWang,QKSong,THYu,Chen2} and the references therein. Very recently,  Li and Kao \cite{Kao} investigated the Mittag-Leffler stability for a new coupled system of fractional-order differential equations with impulses by utilizing the direct graph theory.  Ali et al. \cite{Syed2} studied the impulsive effects on the stability equilibrium solution for Riemann-Liouville fractional-order fuzzy BAM neural networks with time delay under mild conditions. Pratap et al. \cite{JDCao} introduced a class of delayed fractional-order competitive neural networks with impulsive effects and established the stability and synchronization criteria of the considered networks.  Popa \cite{Popa} studied the global $\mu$-stability of neutral-type impulsive complex-valued BAM neural networks with leakage delay and unbounded time-varying delays.  Nevertheless, in some practical situations, it is necessary to consider the fractional-order projection neural networks with impulsive effects. However, to the best of our knowledge, the discipline of the fractional-order projection neural networks with impulsive effects is still not explored and much is desired to be done. The present work is to make an attempt in this new direction.

We now briefly sketch the contents of the paper. In the next section we present some necessary definitions, lemmas and model description. After that Section 3 characterizes the solution of fractional-order projection neural networks with impulses. In Section 4, we investigate the Mittag-leffler stability of fractional-order  projection neural networks with impulses under under some mild conditions.  Two numerical examples are given in Section 5 to confirm the validity of our main results, before we summarize the results in Section 6.

\section{Preliminaries}\noindent
\setcounter{equation}{0}
Let $R=(-\infty, +\infty)$, $Z^{+}=\{1,2,\cdots\}$, and $C$ be the set of all complex numbers. Let $R^n$ be the $n$-dimensional Euclidean space, $R^{m\times n}$ the set of all $m\times n$ real matrices, and $I$ the identity matrix with appropriate dimension. Let $A^T$ and $A^{-1}$ denote the transpose and the inverse of matrix $A$, respectively. Assume that $\|z\|=\sqrt{z^{T}z}$ stands for the Euclidean norm of a real vector $z$. For a real matrix $A$, $\lambda_{\max}(A)$ and $\lambda_{\min}(A)$ denote, respectively, the maximal and minimal eigenvalues of $A$, and $A>0$ $(A<0)$ means the matrix $A$ is symmetric and positive definite (or negative definite). The norm of a real matrix $A$ is defined by $\|A\|=\sqrt{\lambda_{\max}(A^{T}A)}$.  In addition, let $J=[0, T]$ and $C(J,X)$ be the Banach
space of all continuous functions from $J$ into Banach space $X$ with a norm $\|u\|_C= \sup\{\|u(t)\|:t \in J\}$  for $u(t) \in C(J,X)$.  Moreover, let $PC(J,X)=\{u :J \rightarrow X :u \in C((t_k, t_{k+1}], X), k=0,1,\cdots,m, \;
\text{there exist} ~ u(t_k^-) \; \text{and}~ u(t_k^+)~\text{with} ~ u(t_k^-)=u(t_k)\}$ with norm $\|u\|_{PC}= \sup\{\|u(t)\|:t \in J\}.$

\begin{definition}\cite{IP,Kilbas} The Riemann-Liouville fractional integral of order $q$ for a function $x(t)$ is defined as
\label{def2.1}
\begin{equation}
\leftidx{_{ t_0}}D{_t^{-q}}x(t)=\frac{1}{\Gamma(q)}\int_{t_0}^t(t-s)^{q-1}x(s)ds,
\end{equation}
where $q>0$ and $t>t_0$. The Gamma function $\Gamma(q)$ is defined by the integral $$\Gamma(q)=\int_{0}^{+\infty}s^{q-1}e^{-s}ds.$$
\end{definition}

\begin{definition}\cite{IP,Kilbas} The Riemann-Liouville fractional derivative of order $q$ for a function $x(t)$ is defined as
\label{def2.2}
\begin{equation}
\leftidx{_{ t_0}}D{_t^{q}}x(t)=\frac{1}{\Gamma(n-q)}\frac{d^n}{dt^n}\int_{t_0}^t(t-s)^{n-q-1}x(s)ds,
\end{equation}
where $n-1\leq q < n$ and $n\in {Z^{+}}$.
\end{definition}

\begin{definition}\cite{IP,Kilbas} The Caputo fractional derivative of order $q$ for a function $x(t)$ is defined as
\label{def2.3}
\begin{equation}
\leftidx{_{ t_0}^C}D{_t^{q}}x(t)=\frac{1}{\Gamma(n-q-1)}\int_{t_0}^t(t-s)^{n-q-1}x^{(n)}(s)ds,\quad t>t_0,
\end{equation}
where $ n-1<q < n$ and $n\in {Z^{+}}$.
\end{definition}
Moreover, the Mittag-Leffler function with two parameters $\alpha>0$ and $\beta>0$ is defined by
$$E_{\alpha,\beta}(z)=\sum_{k=0}^\infty\frac{z^k}{\Gamma(\alpha k+\beta)},\quad \alpha>0, \; \beta>0, \; z \in C.$$
For $\beta=1$, the one-parameter Mittag-Leffler function is given by
$$E_\alpha(z):=E_{\alpha,1}(z)=\sum_{k=0}^\infty\frac{z^k}{\Gamma(\alpha k+1)}, \quad \alpha>0, \; z \in C.$$
In particular, $E_1(z)=e^z.$

\begin{lemma}\label{lemma2.1}\cite{Liu}
Let $x(t)\in R^{n}$ be a vector of differentiable function. Then, for any
time instant $t \geq t_{0}$, the following relationship holds:
$$\frac{1}{2}\leftidx{_{ t_0}^C}D{_t^{q}}\big(x^{T}(t)Qx(t)\big)\leq x^{T}(t)Q\left(\leftidx{_{ t_0}^C}D{_t^{q}}x(t)\right), \quad q \in (0,1),$$
where $Q\in  R^{n\times n}$ is a constant, square, symmetric and positive definite matrix.
\end{lemma}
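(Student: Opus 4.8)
The plan is to reduce the matrix inequality to a one–dimensional one by a linear change of variables, and then to prove the scalar version directly from the integral definition of the Caputo derivative by integrating by parts against the singular kernel $(t-s)^{-q}$.

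First I would change variables so that the quadratic form becomes a plain sum of squares. Since $Q$ is symmetric and positive definite it has a symmetric positive definite square root $Q^{1/2}$; put $y(t)=Q^{1/2}x(t)=(y_1(t),\dots,y_n(t))^{T}$. Because $Q^{1/2}$ is a constant matrix, $\leftidx{_{ t_0}^C}D{_t^{q}}y(t)=Q^{1/2}\,\leftidx{_{ t_0}^C}D{_t^{q}}x(t)$, so
$$x^{T}(t)Qx(t)=y^{T}(t)y(t)=\sum_{i=1}^{n}y_i^{2}(t),\qquad x^{T}(t)Q\,\leftidx{_{ t_0}^C}D{_t^{q}}x(t)=y^{T}(t)\,\leftidx{_{ t_0}^C}D{_t^{q}}y(t)=\sum_{i=1}^{n}y_i(t)\,\leftidx{_{ t_0}^C}D{_t^{q}}y_i(t).$$
Hence it suffices to prove the scalar inequality $\tfrac12\,\leftidx{_{ t_0}^C}D{_t^{q}}\big(y^{2}(t)\big)\le y(t)\,\leftidx{_{ t_0}^C}D{_t^{q}}y(t)$ for an arbitrary scalar differentiable function $y$ and then add the $n$ resulting inequalities, using linearity of the Caputo derivative.

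For the scalar statement, fix $t>t_0$ and use the integral representation of the Caputo derivative (Definition~\ref{def2.3} with $n=1$) to obtain
$$\frac12\,\leftidx{_{ t_0}^C}D{_t^{q}}\big(y^{2}(t)\big)-y(t)\,\leftidx{_{ t_0}^C}D{_t^{q}}y(t)=\frac{1}{\Gamma(1-q)}\int_{t_0}^{t}(t-s)^{-q}\big(y(s)-y(t)\big)y'(s)\,ds.$$
Writing $z(s):=y(s)-y(t)$, the integrand equals $\tfrac12(t-s)^{-q}\frac{d}{ds}z^{2}(s)$. Integrating by parts on $[t_0,t-\varepsilon]$ and letting $\varepsilon\downarrow0$ — the endpoint contribution at $s=t$ vanishes because $z(t)=0$ and $z$ is differentiable, so $z^{2}(s)=O\big((t-s)^{2}\big)$ while $2-q>0$ — gives
$$\frac12\,\leftidx{_{ t_0}^C}D{_t^{q}}\big(y^{2}(t)\big)-y(t)\,\leftidx{_{ t_0}^C}D{_t^{q}}y(t)=-\frac{(t-t_0)^{-q}z^{2}(t_0)}{2\Gamma(1-q)}-\frac{q}{2\Gamma(1-q)}\int_{t_0}^{t}(t-s)^{-q-1}z^{2}(s)\,ds.$$
Since $q\in(0,1)$ yields $\Gamma(1-q)>0$ and both $(t-t_0)^{-q}z^{2}(t_0)\ge0$ and $(t-s)^{-q-1}z^{2}(s)\ge0$, the right-hand side is $\le0$, which is precisely the scalar inequality; together with the reduction above this proves the lemma.

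The only delicate point is the handling of the singular kernel: the integration by parts should be performed on the truncated interval $[t_0,t-\varepsilon]$, after which one verifies that $(t-s)^{-q}z^{2}(s)\to0$ as $s\uparrow t$ and that $\int_{t_0}^{t}(t-s)^{-q-1}z^{2}(s)\,ds$ converges, both being immediate consequences of the differentiability of $y$ at $t$ (which forces $z^{2}(s)=O\big((t-s)^{2}\big)$ near $s=t$). The change of variables through $Q^{1/2}$ and the sign bookkeeping are otherwise routine, so I expect no further obstacle.
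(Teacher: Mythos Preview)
The paper does not supply its own proof of this lemma; it is quoted from \cite{Liu} (and ultimately from the Duarte-Mermoud--Aguila-Camacho line of results). Your argument is correct and is essentially the standard proof in that literature: diagonalize the quadratic form via $y=Q^{1/2}x$ to reduce to the scalar case, then integrate by parts against the Caputo kernel and observe that both resulting terms are nonpositive. The only technical care point---handling the singular endpoint $s=t$ via truncation and the bound $z^{2}(s)=O((t-s)^{2})$---is addressed adequately, so there is no gap.
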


\begin{lemma}\label{lemma2.2}\cite{Zhang}
For the given positive scalar $\rho >0, \vartheta, \upsilon \in R^{n}$ and matrix $Q$, then
$$\vartheta^{T}Q\upsilon\leq \frac{\varrho^{-1}}{2}\vartheta^{T}QQ^T\vartheta+\frac{\varrho}{2}\upsilon\upsilon^T.$$
\end{lemma}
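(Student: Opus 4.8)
The plan is to prove Lemma~\ref{lemma2.2} by the classical ``completing the square'' device for quadratic forms. First I would note that, since $\vartheta^{T}Q\upsilon$ is a scalar, it coincides with the inner product $\inner{Q^{T}\vartheta}{\upsilon}$ in $R^{n}$, so that in particular $\vartheta^{T}Q\upsilon=(Q^{T}\vartheta)^{T}\upsilon$. Because the hypothesis gives $\varrho>0$, the numbers $\varrho^{1/2}$ and $\varrho^{-1/2}$ are well defined, and one may form the vector $w:=\varrho^{-1/2}Q^{T}\vartheta-\varrho^{1/2}\upsilon$, which lies in $R^{n}$ (here one uses the implicit compatibility of dimensions, i.e. $Q\in R^{n\times n}$, or more generally $Q\in R^{m\times n}$ with $\vartheta\in R^{m}$).

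The key step is then the single algebraic identity obtained by expanding $0\le w^{T}w$:
$$0\le\big(\varrho^{-1/2}Q^{T}\vartheta-\varrho^{1/2}\upsilon\big)^{T}\big(\varrho^{-1/2}Q^{T}\vartheta-\varrho^{1/2}\upsilon\big)=\varrho^{-1}\,\vartheta^{T}QQ^{T}\vartheta-2\,\vartheta^{T}Q\upsilon+\varrho\,\upsilon^{T}\upsilon .$$
Transposing and dividing by $2$ immediately yields
$$\vartheta^{T}Q\upsilon\le\frac{\varrho^{-1}}{2}\,\vartheta^{T}QQ^{T}\vartheta+\frac{\varrho}{2}\,\upsilon^{T}\upsilon ,$$
which is precisely the assertion of the lemma; I would point out in passing that the right-most term should be read as the scalar $\upsilon^{T}\upsilon$ rather than the rank-one matrix ``$\upsilon\upsilon^{T}$'' as literally typeset, and that the scalar $\varrho$ plays the role of a free parameter to be tuned later when the estimate is substituted into the Lyapunov computations of Section~4. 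An equivalent route would be to invoke the scalar Young inequality $2ab\le\varrho^{-1}a^{2}+\varrho b^{2}$ coordinatewise on $\inner{Q^{T}\vartheta}{\upsilon}=\sum_{i}(Q^{T}\vartheta)_{i}\upsilon_{i}$ and then recognize $\sum_{i}(Q^{T}\vartheta)_{i}^{2}=\vartheta^{T}QQ^{T}\vartheta$ and $\sum_{i}\upsilon_{i}^{2}=\upsilon^{T}\upsilon$, but this requires an extra regrouping and is less transparent, so I would favour the quadratic-form version.

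Honestly, there is no real obstacle here: the whole argument is one line of expansion, and, unlike Lemma~\ref{lemma2.1}, no symmetry or positive-definiteness of $Q$ is needed — the inequality holds for an arbitrary (possibly rectangular) real matrix $Q$. The only mild care points are (i) keeping the dimensions consistent so that $w$ makes sense, and (ii) choosing the split of $\varrho$ between the two terms correctly, which is forced by the requirement that the cross term reproduce exactly $-2\,\vartheta^{T}Q\upsilon$.
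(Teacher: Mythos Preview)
Your argument is correct: the completing-the-square expansion $0\le\|\varrho^{-1/2}Q^{T}\vartheta-\varrho^{1/2}\upsilon\|^{2}$ is exactly the standard one-line proof of this Young-type inequality, and your remark that the term $\upsilon\upsilon^{T}$ should be read as the scalar $\upsilon^{T}\upsilon$ is well taken. Note, however, that the paper does not supply its own proof of Lemma~\ref{lemma2.2}; it merely quotes the result from \cite{Zhang}, so there is nothing in the paper to compare your approach against.
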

\begin{definition}\cite{WZH2}
\label{def2.3}
 If $K$ is a nonempty closed convex subset of $ R^{n}$, then the projection operator $P_{K}:R^n \rightarrow K$ is defined as follows
\begin{equation*}
P_{K}(y) =\arg\min_{z\in K}\Vert y-z\Vert, \quad \forall y \in R^n.
\end{equation*}
\end{definition}

\begin{lemma}\label{lemma2.3}\cite{Kinder}
If $K$ is a convex closed subset of $R^n$, then the projection $P_K$ is non-expansive, i.e.,
\begin{equation*}
\left\|P_K(u)-P_K(v)\right\| \leq \|u-v\|, \quad \forall u,v \in R^n.
\end{equation*}
\end{lemma}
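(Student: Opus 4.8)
The plan is to deduce the non-expansiveness of $P_K$ from the classical obstacle-angle (variational) characterization of the metric projection onto a closed convex set. Concretely, I would first establish that for a nonempty closed convex set $K\subseteq R^n$ and any $y\in R^n$, the point $z=P_K(y)$ is well defined and is the unique element of $K$ satisfying
\begin{equation*}
\inner{y-z}{w-z}\le 0\qquad\text{for all }w\in K.
\end{equation*}

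Proving this characterization is the one substantive step. Existence of a minimizer of $z\mapsto\norm{y-z}$ over $K$ follows because this function is continuous and coercive on the closed set $K$ (intersect $K$ with a sufficiently large closed ball and invoke compactness); uniqueness follows from strict convexity of $z\mapsto\norm{y-z}^2$, equivalently from the parallelogram law applied to two competing minimizers. For the inequality, fix $w\in K$; convexity of $K$ gives $z+t(w-z)\in K$ for $t\in[0,1]$, so $\varphi(t):=\norm{y-z-t(w-z)}^2$ attains its minimum over $[0,1]$ at $t=0$, and the one-sided derivative condition $\varphi'(0^+)=-2\inner{y-z}{w-z}\ge 0$ yields the claim. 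Conversely, if $z\in K$ satisfies the inequality then, for any $w\in K$, $\norm{y-w}^2=\norm{y-z}^2+2\inner{y-z}{z-w}+\norm{z-w}^2\ge\norm{y-z}^2$, so $z=P_K(y)$.

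With the characterization available, the conclusion follows by a short computation. Set $a=P_K(u)$ and $b=P_K(v)$. Taking $w=b$ in the inequality for $y=u$ gives $\inner{u-a}{b-a}\le 0$, and taking $w=a$ in the inequality for $y=v$ gives $\inner{v-b}{a-b}\le 0$; adding these and rearranging yields $\inner{u-v}{b-a}+\norm{b-a}^2\le 0$, whence $\norm{a-b}^2\le\inner{u-v}{a-b}\le\norm{u-v}\,\norm{a-b}$ by the Cauchy--Schwarz inequality. If $a=b$ the asserted bound is trivial; otherwise dividing by $\norm{a-b}>0$ gives $\norm{P_K(u)-P_K(v)}=\norm{a-b}\le\norm{u-v}$. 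The only genuinely delicate point is the boundary derivative computation for $\varphi$ in the previous paragraph; everything after the characterization is purely algebraic. Since the statement is quoted from \cite{Kinder}, one may instead simply cite it, but the self-contained argument above is short enough to record.
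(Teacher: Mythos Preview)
Your argument is correct and is the standard textbook proof of non-expansiveness of the metric projection onto a closed convex set. Note, however, that the paper does not supply its own proof of this lemma at all: it is stated with a citation to Kinderlehrer--Stampacchia and used as a black box. So there is nothing to compare against; your self-contained derivation via the variational characterization $\inner{y-P_K(y)}{w-P_K(y)}\le 0$ for all $w\in K$ simply fills in what the paper leaves to the reference, and it does so cleanly.
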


\begin{lemma}\label{lemma2.4}\cite{Ye}
For a $\alpha>0 $, suppose $a(t)$ is a nonnegative, nondecreasing function locally
integrable on $0\leq t <T $ some $(T \leq +\infty )$, and $ b(t) \leq M$ is a nonnegative, nondecreasing continuous function defined
on $0\leq t <T $, where $M$ is a constant. If $x(t)$ is nonnegative and locally integrable on $0\leq t <T $ satisfying
$$x(t)=a(t) + b(t)\int_{0}^t (t-s)^{\alpha-1}x(s)ds,$$
then $x(t)\leq a(t)E_\alpha(b(t)\Gamma(\alpha)t^\alpha).$
\end{lemma}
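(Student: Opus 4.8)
The plan is to read the hypothesis $x = a + \mathcal{B}x$, where $\mathcal{B}$ is the linear Volterra operator
$$(\mathcal{B}\phi)(t) := b(t)\int_0^t (t-s)^{\alpha-1}\phi(s)\,ds .$$
Iterating this identity $n$ times gives, for every $n\ge 1$,
$$x(t) = \sum_{k=0}^{n-1}(\mathcal{B}^{k}a)(t) + (\mathcal{B}^{n}x)(t),$$
so the proof splits into two tasks: estimating the partial sums $\sum_k (\mathcal{B}^{k}a)(t)$ from above by the truncated Mittag-Leffler series, and showing that the remainder $(\mathcal{B}^{n}x)(t)\to 0$ as $n\to\infty$ on each compact subinterval of $[0,T)$.

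First I would prove by induction on $k$ that
$$(\mathcal{B}^{k}a)(t) \le a(t)\,\frac{\big[b(t)\Gamma(\alpha)\big]^{k}}{\Gamma(k\alpha+1)}\,t^{k\alpha}, \qquad k=0,1,2,\dots$$
The case $k=0$ is trivial. For the step, since $a$ and $b$ are nonnegative and nondecreasing we have $a(s)\le a(t)$ and $b(s)\le b(t)$ for $0\le s\le t$, so substituting the $k$-th bound into $\mathcal{B}$ and using the Beta-function identity $\int_0^t (t-s)^{\alpha-1}s^{k\alpha}\,ds = t^{(k+1)\alpha}\,\Gamma(\alpha)\Gamma(k\alpha+1)/\Gamma((k+1)\alpha+1)$ yields
$$(\mathcal{B}^{k+1}a)(t) \le a(t)\,\frac{\big[b(t)\Gamma(\alpha)\big]^{k}b(t)}{\Gamma(k\alpha+1)}\int_0^t (t-s)^{\alpha-1}s^{k\alpha}\,ds = a(t)\,\frac{\big[b(t)\Gamma(\alpha)\big]^{k+1}}{\Gamma((k+1)\alpha+1)}\,t^{(k+1)\alpha},$$
closing the induction. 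Summing over $k$ then controls $\sum_{k=0}^{n-1}(\mathcal{B}^{k}a)(t)$ by the partial sum of $a(t)\,E_\alpha\!\big(b(t)\Gamma(\alpha)t^\alpha\big)$.

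For the remainder, bounding $b(\cdot)\le M$ and applying Fubini--Tonelli together with the same Beta identity at each stage gives, by a parallel induction,
$$0 \le (\mathcal{B}^{n}x)(t) \le \frac{[M\Gamma(\alpha)]^{n}}{\Gamma(n\alpha)}\int_0^t (t-s)^{n\alpha-1}x(s)\,ds .$$
Fixing $0\le t\le t_1 < T$ and taking $n$ large enough that $n\alpha\ge 1$, we bound $(t-s)^{n\alpha-1}\le t_1^{\,n\alpha-1}$ and obtain
$$(\mathcal{B}^{n}x)(t) \le \frac{[M\Gamma(\alpha)]^{n}\,t_1^{\,n\alpha-1}}{\Gamma(n\alpha)}\int_0^{t_1}x(s)\,ds \;\longrightarrow\; 0 \qquad (n\to\infty),$$
since $c^{n}/\Gamma(n\alpha)\to 0$ for every fixed $c\ge 0$ and $x$ is integrable on $[0,t_1]$. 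Letting $n\to\infty$ in the iterated identity and passing to the limit in the series term by term gives $x(t)\le a(t)\sum_{k=0}^{\infty}[b(t)\Gamma(\alpha)t^\alpha]^{k}/\Gamma(k\alpha+1) = a(t)\,E_\alpha\!\big(b(t)\Gamma(\alpha)t^\alpha\big)$, which is the claim.

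I expect the \emph{main obstacle} to be the rigorous justification of the remainder estimate: establishing the iterated-kernel bound $(\mathcal{B}^{n}x)(t)\le \tfrac{[M\Gamma(\alpha)]^{n}}{\Gamma(n\alpha)}\int_0^t (t-s)^{n\alpha-1}x(s)\,ds$ requires an induction in which the order of integration is interchanged (legitimate by Tonelli, as all integrands are nonnegative and locally integrable) and the Beta integral evaluated repeatedly; one also has to be careful that $x$ is only locally integrable, not bounded, which is why the estimate is kept in integral form until the final bound on a compact interval. Everything else — the monotonicity arguments for $\mathcal{B}^{k}a$ and the termwise passage to the limit, which is justified by convergence of the partial sums uniformly on compacta — is routine.
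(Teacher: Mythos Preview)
The paper does not supply its own proof of this lemma: it is quoted verbatim from the reference \cite{Ye} and used as a black box. Your argument --- iterate the integral relation, bound $(\mathcal{B}^{k}a)(t)$ via the monotonicity of $a,b$ together with the Beta integral, and kill the remainder $(\mathcal{B}^{n}x)(t)$ using $[M\Gamma(\alpha)]^{n}/\Gamma(n\alpha)\to 0$ --- is correct and is precisely the Picard-iteration proof given in the original source, so there is nothing to compare.
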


Let $J=[0,T]$ and  $J'=J\setminus \{t_{1},t_{2},\cdots,t_{m}\}$. This paper is addressed to study the following fractional-order projection neural networks with impulses (FPNNI):
\begin{equation} \label{eq2.3}
\left\{
\begin{array}{l}
\leftidx{_0^C}D{_t^\alpha}x_{i}(t)=-x_{i}(t)+P_{K}\left( x_{i}(t)-\rho \sum \limits_{j=1}^{n}a_{ij}x_{j}(t)-\rho b_{i}\right),\ t\in J', \\
\triangle x_{i}(t_k) =x_{i}(t_k^{+})-x_{i}(t_k^{-})=U_{ki}(x_{i}(t_k)),\\
x_{i}(0)=x_{i0}, \; i=1,2,\cdots,n; \; k=1,2,\cdots,m,
\end{array}
\right.
\end{equation}
or in the matrix-vector notation

\begin{equation} \label{eq2.4}
\left\{
\begin{array}{l}
\leftidx{_0^C}D{_t^\alpha}x(t)=-x(t)+P_{K}\left(x(t) -\rho Ax(t)-\rho b\right),\ t\in J', \\
\triangle x(t_k) =x(t_k^{+})-x(t_k^{-})=U_{k}(x(t_k)),\\
x(0)=x_0,k=1,2,\cdots,m,
\end{array}
\right.
\end{equation}
where $\leftidx{_0^C}D{_t^\alpha}x(t)$ denotes an $\alpha$ order Caputo fractional derivative of $x(t)$ with $0 < \alpha < 1$, $n$ is the number of neurons in the indicated neural networks, $x(t) = (x_1(t), x_2(t), \cdots, x_n(t))^{T} $ is the state vector of the networks at time $t$, $K$ is a closed and convex subset of $R^{n}$, $P_{K}$ is the projection operator, $A=(a_{ij})_{n\times n} \in R^{n\times n}$, $U_{k}:R^{n} \longrightarrow R^{n}$ stands for the jump operator of impulsive, and impulsive moments $\{t_k,k=1,2,3,\cdots\}$ satisfy $ 0=t_0 < t_1 < t_2 <\cdots <t_k < t_{k+1}<\cdots <t_m < t_{m+1}=T$; $x(t_k^{+})=\lim_{t\rightarrow t_k^{+}}x(t_k)$ and $x(t_k^{-})=\lim_{t\rightarrow t_k^{-}}x(t_k)$ express the right and left limits of $x(t)$ at $t=t_k$, moreover,
$x(t_k^{-})=x(t_k).$

\begin{definition}\label{def2.5}
A function $x(t):J \rightarrow R^{n}$ is said to be a solution of system (\ref{eq2.4}) on $J$ with the initial condition $x(0)=x_{0}$ if
\begin{itemize}
\item [(i)] $x(t)$ is absolutely continuous on each interval $(t_k,t_{k+1}]$ with $k=1,2,\cdots,m$;
\item[(ii)] $x(t)$ satisfies the system (\ref{eq2.4}) for almost all $t\in J$;
\item[(iii)] For any  $t_k \in (0,\infty)$, $x(t_{k}^{-})$ and $x(t_{k}^{+})$ exist such that $x(t_{k}^{-})=x(t_k)$ for $k=1,2,\cdots,m$.
\end{itemize}
\end{definition}

\begin{definition}\label{def2.6}
A constant vector  $x^{\ast} \in R^{n}$ is said to be an equilibrium point of system (\ref{eq2.4}) if $x^*$ satisfies the following equation
$$-x^*+P_{K}(x^* -\rho Ax^*-\rho b)=0$$
and $U_k(x_k^*)=0$.
\end{definition}

\begin{definition}\label{def2.7}
An equilibrium point $x^*$ of system (\ref{eq2.4}) is said to be globally Mittag-Leffler stable if there exist two constants $\lambda>0$ and $b>0$ such that
\begin{equation*}
\left\|x(t)- x^\ast\right\| \leq [m\left(x_0-x^\ast\right) E_\alpha\left(-\lambda t)^\alpha\right)]^b,
\end{equation*}
where $m(x):R^n\to R$ is local Lipschitz continuous with $m(0)=0$, $m(x)\geq 0$ for all $x\in R^n$, and $E_\alpha$ is a one-parameter Mittag-Leffler function. The system (\ref{eq2.4}) is said to be globally Mittag-Leffler stable providing its equilibrium point is globally Mittag-Leffler stable.
\end{definition}

\begin{definition}\label{def2.8}\cite{Lak}
A function $V: [0, +\infty)\times R^n \rightarrow  [0, +\infty)$ is said to belong to class $V_0$ if
\begin{itemize}
\item [(1)] $V(t,u)$ is continuous in $(t_{k-1},t_k)\times R^n $, for each $u \in R^n, \lim_{(t,v)\rightarrow (t_k^+,u)}V(t,v)=V(t_k^+,u)$ exits, and $V(t_k^-,u)=V (t_k,u)$, $k=1, 2, \cdots, m$;
\item[(2)] $ V(t,u) $ is locally Lipschitz continuous with respect to $u$.
\end{itemize}
\end{definition}

\begin{lemma}\label{lemma2.5}\cite{Yang}
Consider the following impulsive fractional-order differential equation system
\begin{equation} \label{eq2.5}
\left\{
\begin{array}{l}
\leftidx{_0^C}D{_t^\alpha}u(t)=g(t,u(t)),\ t\in J', \\
\triangle u(t_k) =u(t_k^{+})-u(t_k^{-})=\Upsilon_{k}(u(t_k)),\\
u(0)=u_0,k=1,2,\cdots,m.
\end{array}
\right.
\end{equation}
Suppose that $g(t,0)=0, t>0, \Upsilon_k(0) = 0$ and $\varphi(u ) = u + \Upsilon(u ) \geq 0$ are nondecreasing with respect to $u$, $k=1,2,\cdots,m$,  and there exists  $V(t)\in V_0$ with $V(t,0)=0$ such that
\begin{itemize}
\item  $\gamma_1\|u(t)\|^p\leq V(t,u(t)) \leq\gamma_2\|u(t)\|^{pq}, (t,u(t))\in J'\times R^n$;
\item  $\leftidx{_0^C}D{_t^\alpha}V(t,u(t))\leq-\gamma_3V(t,u(t)), (t,u(t))\in (t_{k-1},t_k)\times R^n, k=1,2,\cdots,m$;
\item $V(t^{+}, u(t)+\Upsilon_k(u(t)))\leq \gamma_4V(t,u(t)), u\in R^n, t=t_k, k=1,2,\cdots,m$.
\end{itemize}
Here $\gamma_4\in(0,1], \gamma_1,\gamma_2, \gamma_3, p$ and $q$ are arbitrary positive constants.  Then the zero solution of (\ref{eq2.5}) is globally Mittag-Leffler stable.
\end{lemma}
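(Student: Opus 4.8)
The plan is to follow the scalar quantity $t\mapsto V(t,u(t))$ along a solution $u$ of (\ref{eq2.5}) and to apply the three structural hypotheses in turn, interval by interval between the impulses. On $[0,t_1)$ the second hypothesis is exactly the fractional differential inequality $\leftidx{_0^C}D{_t^\alpha}V(t,u(t))\le-\gamma_3V(t,u(t))$, so the standard fractional comparison principle --- if $\leftidx{_0^C}D{_t^\alpha}W(t)\le-\gamma_3W(t)$ with $W(0)=W_0\ge0$ then $W(t)\le W_0E_\alpha(-\gamma_3 t^\alpha)$ --- gives $V(t,u(t))\le V(0,u_0)E_\alpha(-\gamma_3 t^\alpha)$ on $[0,t_1)$; letting $t\uparrow t_1$ and using $V(t_1^-,\cdot)=V(t_1,\cdot)$ we also get $V(t_1,u(t_1))\le V(0,u_0)E_\alpha(-\gamma_3 t_1^\alpha)$.

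At $t_1$, the third hypothesis together with $\gamma_4\in(0,1]$ yields
$$V(t_1^+,u(t_1^+))=V\bigl(t_1^+,\,u(t_1)+\Upsilon_1(u(t_1))\bigr)\le\gamma_4 V(t_1,u(t_1))\le V(0,u_0)E_\alpha(-\gamma_3 t_1^\alpha)\le V(0,u_0),$$
so the impulse does not enlarge $V$. Restarting at $t_1$, repeating the comparison step on $(t_1,t_2)$, then the jump estimate at $t_2$, and iterating over $k=1,\dots,m$, one obtains on $[t_{k-1},t_k)$ the bound $V(t,u(t))\le\gamma_4^{\,k-1}V(0,u_0)E_\alpha(-\gamma_3(t-t_{k-1})^\alpha)\le V(0,u_0)E_\alpha(-\gamma_3(t-t_{k-1})^\alpha)$; in particular $V(t,u(t))\le V(0,u_0)$ throughout $J$, and on the last piece $[t_m,T]$ the sharper $V(t,u(t))\le V(0,u_0)E_\alpha(-\gamma_3(t-t_m)^\alpha)$. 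Because there are only finitely many impulses, the ratio $E_\alpha(-\gamma_3(t-t_m)^\alpha)/E_\alpha(-\gamma_3 t^\alpha)$ is bounded on $[t_m,T]$ (its denominator is strictly positive since $E_\alpha(-x)>0$ for $\alpha\in(0,1)$, $x\ge0$), while on the compact set $[0,t_m]$ one has $E_\alpha(-\gamma_3 t^\alpha)\ge E_\alpha(-\gamma_3 t_m^\alpha)>0$; so the two estimates merge into
$$V(t,u(t))\le C^{\ast}\,V(0,u_0)\,E_\alpha(-\gamma_3 t^\alpha),\qquad t\in J,$$
with $C^{\ast}\ge1$ depending only on $\alpha$, $\gamma_3$, $T$ and the points $t_k$.

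Feeding this into the two-sided bound of the first hypothesis, $\gamma_1\|u(t)\|^p\le V(t,u(t))\le C^{\ast}\gamma_2\|u_0\|^{pq}E_\alpha(-\gamma_3 t^\alpha)$, whence
$$\|u(t)\|\le\Bigl[\frac{C^{\ast}\gamma_2}{\gamma_1}\,\|u_0\|^{pq}\,E_\alpha(-\gamma_3 t^\alpha)\Bigr]^{1/p}.$$
This is precisely the form required by Definition \ref{def2.7}, with $m(u_0)=\tfrac{C^{\ast}\gamma_2}{\gamma_1}\|u_0\|^{pq}$ (nonnegative and vanishing at the origin), decay constant $\lambda=\gamma_3>0$, and exponent $b=1/p>0$; hence the zero solution of (\ref{eq2.5}) is globally Mittag-Leffler stable. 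The same chain at $t=0$ gives the uniform bound $\|u(t)\|\le(\gamma_2/\gamma_1)^{1/p}\|u_0\|^{q}$ on $J$, which a posteriori justifies that the solution does extend over all of $J$.

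The step I expect to be the main obstacle is making the fractional comparison genuinely rigorous across the impulses: the Caputo derivative in (\ref{eq2.5}) carries memory from $t=0$, so the estimate on $(t_{k-1},t_k)$ is not literally a fresh start and the envelope argument must be justified with care. The clean way out is to read the solution on each $(t_{k-1},t_k)$ via the Volterra representation restarted at $t_{k-1}$ --- the standard convention for impulsive fractional systems, in the spirit of Definition \ref{def2.5} --- so that the decay lemma applies subinterval by subinterval; alternatively one proves a dedicated impulsive fractional comparison lemma, and this is precisely where the hypothesis that $\varphi(u)=u+\Upsilon(u)$ is nonnegative and nondecreasing enters, since it makes the jump map of the scalar comparison system order preserving. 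That same hypothesis, together with $g(t,0)=0$ and $\Upsilon_k(0)=0$, also secures the well-posedness of (\ref{eq2.5}) needed for the continuation argument above; everything else is routine Lyapunov-and-comparison bookkeeping.
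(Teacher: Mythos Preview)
The paper does not prove Lemma~\ref{lemma2.5}; it is quoted verbatim from \cite{Yang} and used as a black box in Theorems~\ref{th4.1} and~\ref{th4.2}. So there is no in-paper proof to compare against, and your sketch should be measured against the argument in \cite{Yang}.

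Your outline is the right Lyapunov-plus-comparison skeleton, and you correctly isolate the only nontrivial point: the Caputo operator in the hypothesis is $\leftidx{_0^C}D_t^\alpha$, anchored at $0$, so the ``restart at $t_{k-1}$'' manoeuvre that yields $V(t)\le V(t_{k-1}^+)E_\alpha(-\gamma_3(t-t_{k-1})^\alpha)$ on $(t_{k-1},t_k)$ is not literally available. Your option~(a) (reinterpret the dynamics via a Volterra kernel restarted at $t_{k-1}$) changes the meaning of the hypothesis and is not the convention here; option~(b) is what \cite{Yang} actually does. There one proves a single \emph{global} impulsive fractional comparison lemma: if $\leftidx{_0^C}D_t^\alpha V\le -\gamma_3 V$ on each $(t_{k-1},t_k)$ and $V(t_k^+)\le\gamma_4 V(t_k)$, then $V(t,u(t))\le r(t)$ for all $t$, where $r$ is the maximal solution of the scalar impulsive problem $\leftidx{_0^C}D_t^\alpha w=-\gamma_3 w$, $w(t_k^+)=\gamma_4 w(t_k)$, $w(0)=V(0,u_0)$. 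The monotonicity assumption on $\varphi(u)=u+\Upsilon(u)$ is exactly what makes the jump map order preserving in that comparison. One then checks directly that, because $\gamma_4\in(0,1]$, the impulse-free solution $w_0(t)=V(0,u_0)E_\alpha(-\gamma_3 t^\alpha)$ dominates $r$, giving $V(t,u(t))\le V(0,u_0)E_\alpha(-\gamma_3 t^\alpha)$ on all of $J$ without any interval-splicing constant $C^\ast$. Feeding this into the two-sided bound of the first hypothesis finishes as you wrote. In short: your plan is sound once option~(b) is carried out, and the global comparison route both closes the gap you flagged and removes the extraneous constant $C^\ast$.
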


\begin{definition}\label{def2.7}\cite{EZ}
Let $\Lambda$ be a bounded subset in a metric space $(X,d)$. Then Kuratowskii noncompactness measure $\mu(\Lambda)$ is defined as follows:
$$\mu(\Lambda)=\inf \left\{\varepsilon>0: \Lambda \subset \displaystyle\bigcup_{i=1}^n Q_i, \, Q_i \subset E, \, \text{diam}(Q_i)<\varepsilon  (i=1,\ldots,n), n\in {Z^{+}} \right\},$$
where $\text{diam}(Q_i)=\sup\left\{d(x,y): \;  x,y \in Q_i \right\}.$
\end{definition}

\begin{definition}\label{def2.8}\cite{EZ}
Let $N:X \rightarrow X$ be a bounded and continuous operator on a Banach
space $X$.  We say that $N$ is condensing if $\mu (N(B))< \mu (B)$ for all bounded set $B\subset D(N)$, where $\mu$ is the Kuratowskii noncompactness measure.
\end{definition}

\begin{lemma}\label{lemma2.6}\cite{MA}
Let $F=F_{1}+F_{2}:X \rightarrow X $. Assume that
\begin{itemize}
\item [(i)] $F_{1}$ is $k$-contraction with $0\leq k<1$, that is $\|F_{1}(x)-F_{1}(y)\|\leq k \|x-y\|$;
\item [(ii)]$F_{2}$ is compact.
\end{itemize}
Then $F$ is condensing.
\end{lemma}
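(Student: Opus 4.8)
The plan is to reduce the statement to three elementary properties of the Kuratowskii measure of noncompactness $\mu$ on the Banach space $X$, each of which follows directly from Definition~\ref{def2.7}: \emph{monotonicity}, namely $B_1\subseteq B_2$ implies $\mu(B_1)\le\mu(B_2)$ (a finite $\varepsilon$-cover of $B_2$ is also one of $B_1$); \emph{algebraic subadditivity}, namely $\mu(B_1+B_2)\le\mu(B_1)+\mu(B_2)$ for $B_1+B_2=\{u_1+u_2:u_i\in B_i\}$; and the \emph{regularity} property that $\mu(B)=0$ if and only if $\overline{B}$ is compact. For subadditivity one covers $B_1$ by sets $Q_1,\dots,Q_{N_1}$ with $\text{diam}(Q_i)<a$ and $B_2$ by sets $Q_1',\dots,Q_{N_2}'$ with $\text{diam}(Q_j')<b$, observes that the $N_1N_2$ Minkowski sums $Q_i+Q_j'$ cover $B_1+B_2$ and satisfy $\text{diam}(Q_i+Q_j')\le\text{diam}(Q_i)+\text{diam}(Q_j')<a+b$, and then lets $a\downarrow\mu(B_1)$ and $b\downarrow\mu(B_2)$; the regularity property is just the characterization of precompactness by finite $\varepsilon$-nets.

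Next I would record the effect of the two summands on $\mu$. First, for any bounded $B$ and any finite cover $\{Q_i\}$ of $B$ with $\text{diam}(Q_i)<\varepsilon$, the sets $F_1(Q_i\cap B)$ cover $F_1(B)$, and hypothesis (i) gives $\text{diam}\big(F_1(Q_i\cap B)\big)\le k\,\text{diam}(Q_i)<k\varepsilon$; taking the infimum over admissible $\varepsilon$ yields $\mu\big(F_1(B)\big)\le k\,\mu(B)$. Second, since $F_2$ is compact, $\overline{F_2(B)}$ is compact for every bounded $B$, so by the regularity property $\mu\big(F_2(B)\big)=0$.

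Finally I would combine these: for an arbitrary bounded set $B\subset X$ one has $F(B)\subseteq F_1(B)+F_2(B)$, whence monotonicity and subadditivity give
\[
\mu\big(F(B)\big)\le\mu\big(F_1(B)+F_2(B)\big)\le\mu\big(F_1(B)\big)+\mu\big(F_2(B)\big)\le k\,\mu(B)+0=k\,\mu(B).
\]
Because $0\le k<1$, this forces $\mu(F(B))\le k\,\mu(B)<\mu(B)$ whenever $\mu(B)>0$; when $\mu(B)=0$ the set $B$ is relatively compact, and the condensing inequality is to be read in the customary way, i.e.\ it is imposed only on sets that are not relatively compact. Hence $F$ is condensing in the sense of Definition~\ref{def2.8}.

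I do not expect any deep obstacle; the statement is essentially bookkeeping with the definition of $\mu$. The step requiring the most care is the joint use of subadditivity and the scaling estimate $\text{diam}(F_1(Q_i\cap B))\le k\,\text{diam}(Q_i)$: one must intersect the covering sets with $B$ so that $F_1$ is defined on them and the Lipschitz bound applies, and one must keep track that a Minkowski sum of finite covers of sizes $N_1$ and $N_2$ is again a finite cover (of size at most $N_1N_2$), hence admissible in the infimum defining $\mu$. A minor point worth flagging is the literal mismatch between Definition~\ref{def2.8} and the customary definition of a condensing operator on relatively compact sets, which is resolved as indicated above.
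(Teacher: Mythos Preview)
Your argument is correct and is the standard proof of this fact. Note, however, that the paper does not supply its own proof of Lemma~\ref{lemma2.6}: the result is simply quoted from \cite{MA}, so there is no in-paper proof to compare against. Your handling of the three basic properties of $\mu$ (monotonicity, algebraic subadditivity, and regularity), the Lipschitz estimate $\mu(F_1(B))\le k\,\mu(B)$, and the vanishing $\mu(F_2(B))=0$ is exactly how this lemma is proved in the references on measures of noncompactness. The caveat you raise about Definition~\ref{def2.8} on relatively compact sets is also appropriate: as written, the definition would require $\mu(F(B))<0$ when $\mu(B)=0$, which is impossible, and the standard convention---used implicitly when the paper later invokes Sadovskii's theorem---is to impose the strict inequality only for bounded $B$ with $\mu(B)>0$.
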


\begin{lemma}\label{lemma2.7}(Sadovskii fixed point theorem)\cite{sad}
Let $Q$ be a convex, bounded, and closed
subset of a Banach space $X$ and let $ N: Q \rightarrow Q$ be a condensing
map. Then, $N$ has a fixed point in $Q$.
\end{lemma}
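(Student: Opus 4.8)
The plan is to reduce the statement to the classical Schauder fixed point theorem by producing a nonempty, compact, convex subset of $Q$ that $N$ maps into itself. The only external tool needed is a short list of standard properties of the Kuratowski measure $\mu$ introduced above: $\mu(\Lambda)=0$ iff $\overline{\Lambda}$ is compact; $\mu$ is monotone under inclusion; $\mu(\overline{\Lambda})=\mu(\Lambda)$; $\mu(\overline{\mathrm{co}}\,\Lambda)=\mu(\Lambda)$, where $\overline{\mathrm{co}}$ denotes the closed convex hull; and $\mu(\Lambda\cup\{x\})=\mu(\Lambda)$ for a single point $x$ (more generally $\mu(\Lambda_1\cup\Lambda_2)=\max\{\mu(\Lambda_1),\mu(\Lambda_2)\}$). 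I would first record these, each of which is immediate from the definition of $\mu$; the condensing hypothesis on $N$ will be invoked exactly once, at the very end.

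Fix any $x_0\in Q$ and consider the family $\mathcal{F}$ of all nonempty closed convex sets $C\subseteq Q$ with $x_0\in C$ and $N(C)\subseteq C$. Since $Q\in\mathcal{F}$, this family is nonempty, and $Q_0:=\bigcap_{C\in\mathcal{F}}C$ is again nonempty (it contains $x_0$), closed and convex; moreover $N(Q_0)\subseteq N(C)\subseteq C$ for every $C\in\mathcal{F}$ forces $N(Q_0)\subseteq Q_0$, so $Q_0\in\mathcal{F}$ is the minimal element. Set $Q_1:=\overline{\mathrm{co}}\big(\{x_0\}\cup N(Q_0)\big)$. On one hand $\{x_0\}\cup N(Q_0)\subseteq Q_0$ and $Q_0$ is closed and convex, so $Q_1\subseteq Q_0$; on the other hand $x_0\in Q_1$ and $N(Q_1)\subseteq N(Q_0)\subseteq Q_1$, so $Q_1\in\mathcal{F}$ and hence $Q_0\subseteq Q_1$. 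Thus
$$Q_0=\overline{\mathrm{co}}\big(\{x_0\}\cup N(Q_0)\big).$$
Proving this self-referential identity for the minimal invariant set is, in my view, the main obstacle; the steps surrounding it are just bookkeeping with closed convex hulls.

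Applying $\mu$ to the displayed identity and using hull-invariance together with $\mu(\{x_0\}\cup\,\cdot\,)=\mu(\cdot)$ gives $\mu(Q_0)=\mu(N(Q_0))$. If $\mu(Q_0)>0$, then $Q_0$ is a bounded subset of $Q$ on which the condensing inequality $\mu(N(Q_0))<\mu(Q_0)$ must hold, contradicting the previous equality; hence $\mu(Q_0)=0$, i.e. $Q_0$ (being closed) is compact. Since $Q_0$ is also nonempty and convex and $N$ is continuous with $N(Q_0)\subseteq Q_0$, Schauder's fixed point theorem provides $x^\ast\in Q_0\subseteq Q$ with $N(x^\ast)=x^\ast$. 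An alternative to the intersection argument is to iterate $R_{n+1}=\overline{\mathrm{co}}\big(\{x_0\}\cup N(R_n)\big)$ from $R_0:=Q$ and check that $\bigcap_n R_n$ is a nonempty compact convex $N$-invariant set; but the minimal-set route above is shorter and avoids any limiting argument.
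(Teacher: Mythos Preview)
Your argument is correct and is essentially the original proof due to Sadovski\u{\i}: build the minimal closed convex $N$-invariant set containing a chosen point, show it equals the closed convex hull of its own image together with that point, use hull-invariance of $\mu$ to force $\mu(Q_0)=\mu(N(Q_0))$, and then invoke the condensing inequality to conclude $\mu(Q_0)=0$ and finish with Schauder. The only thing to be careful about is that the paper's Definition~2.8 literally asks $\mu(N(B))<\mu(B)$ for \emph{all} bounded $B$, which is impossible for compact $B$; your proof correctly applies the strict inequality only under the hypothesis $\mu(Q_0)>0$, which is the intended reading.

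As for comparison: the paper does not supply its own proof of this lemma. It is quoted as a known result with a citation to \cite{sad}, so there is nothing in the paper to compare your approach against. Your write-up is self-contained and matches the classical route; the alternative iterative construction you mention at the end is also standard and would work, but the minimal-set argument you chose is indeed the cleaner of the two.
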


\section{The existence and boundedness of solution  }
\noindent
\setcounter{equation}{0}
In this section, we shall deal with the existence of solution of (\ref{eq2.4}) by using the Sadvoskii fixed point theorem and the Banach fixed point theorem. We will show  boundedness of solution of (\ref{eq2.4}) by employing the inequality techniques.

Note that
$$u(t)=u_0-\frac{1}{\Gamma(\alpha)}\int_{0}^a (a-s)^{\alpha-1}h(s)ds+\frac{1}{\Gamma(\alpha)}\int_{0}^t (t-s)^{\alpha-1}h(s)ds,$$
solves the Cauchy problems
\begin{equation*}
\left\{
\begin{array}{l}
\leftidx{_0^C}D{_t^\alpha}u(t)=h(t),\ t\in J, \\
u(0)=u_0-\frac{1}{\Gamma(\alpha)}\int_{0}^a (a-s)^{\alpha-1}h(s)ds.
\end{array}
\right.
\end{equation*}
One can obtain the following result immediately.

\begin{lemma}\label{lemma3.1}\cite{JRWang}
Let $0<\alpha<1$ and $h:J\rightarrow R$ be continuous. A function $u\in C(J,R)$ is a solution of the fractional integral equation
$$u(t)=u_0-\frac{1}{\Gamma(\alpha)}\int_{0}^a (a-s)^{\alpha-1}h(s)ds+\frac{1}{\Gamma(\alpha)}\int_{0}^t (t-s)^{\alpha-1}h(s)ds,$$
if and only if $u$ is a solution of the following fractional Cauchy problems
\begin{equation*}
\left\{
\begin{array}{l}
\leftidx{_0^C}D{_t^\alpha}u(t)=h(t),\ t\in J, \\
u(a)=u_0,\ a>0.
\end{array}
\right.
\end{equation*}
\end{lemma}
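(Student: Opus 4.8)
The plan is to prove the two implications separately, the only analytic inputs being the standard composition rules for the Riemann--Liouville fractional integral $\leftidx{_0}D{_t^{-\alpha}}$ and the Caputo fractional derivative $\leftidx{_0^C}D{_t^\alpha}$ collected in \cite{IP,Kilbas}: for continuous $h$ one has $\leftidx{_0^C}D{_t^\alpha}\big(\leftidx{_0}D{_t^{-\alpha}}h\big)=h$; for $0<\alpha<1$ and sufficiently regular $u$ one has $\leftidx{_0}D{_t^{-\alpha}}\big(\leftidx{_0^C}D{_t^\alpha}u\big)=u-u(0)$; and the Caputo derivative annihilates constants. Note that the observation displayed just before the statement already identifies the given integral equation with a Cauchy problem whose value at $t=0$ is prescribed, so the role of this lemma is only to re-encode that prescription as the condition $u(a)=u_0$.

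For the forward implication I would assume $u\in C(J,R)$ satisfies the integral equation and apply $\leftidx{_0^C}D{_t^\alpha}$ to both sides: its first two terms are independent of $t$ and are therefore annihilated, while the composition rule applied to $\frac{1}{\Gamma(\alpha)}\int_0^t(t-s)^{\alpha-1}h(s)\,ds=\leftidx{_0}D{_t^{-\alpha}}h(t)$ leaves $\leftidx{_0^C}D{_t^\alpha}u(t)=h(t)$ on $J$. Putting $t=a$ in the integral equation, the running integral becomes exactly $\frac{1}{\Gamma(\alpha)}\int_0^a(a-s)^{\alpha-1}h(s)\,ds$, which cancels the subtracted term, so $u(a)=u_0$; thus $u$ solves the stated Cauchy problem.

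For the converse I would assume $u$ solves $\leftidx{_0^C}D{_t^\alpha}u(t)=h(t)$ on $J$ with $u(a)=u_0$, apply $\leftidx{_0}D{_t^{-\alpha}}$ to the equation, and invoke the second composition rule to get $u(t)=u(0)+\frac{1}{\Gamma(\alpha)}\int_0^t(t-s)^{\alpha-1}h(s)\,ds$ on $J$. Evaluating at $t=a$ and using $u(a)=u_0$ gives $u(0)=u_0-\frac{1}{\Gamma(\alpha)}\int_0^a(a-s)^{\alpha-1}h(s)\,ds$, and substituting this back reproduces the integral equation verbatim. The argument is essentially bookkeeping; the one step worth genuine care---indeed the only real obstacle---is verifying that continuity of $h$ together with the regularity of $u$ built into the definition of solution suffices for the two composition identities to hold pointwise on all of $J$, which is precisely the framework of \cite{IP,Kilbas,JRWang}. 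No impulsive considerations enter at this stage, since the lemma concerns a single jump-free interval; the shifted initial condition is precisely what will afterwards permit gluing the representation across the impulse instants $t_k$.
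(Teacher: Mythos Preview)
Your proposal is correct and matches the paper's treatment: the paper does not give a self-contained proof of this cited lemma, but the brief justification preceding it---observing that the displayed integral equation solves the Cauchy problem with initial datum $u(0)=u_0-\frac{1}{\Gamma(\alpha)}\int_0^a(a-s)^{\alpha-1}h(s)\,ds$, whence the result ``follows immediately''---is exactly the bookkeeping you spell out, relying on the same composition identities from \cite{IP,Kilbas}.
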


By using  Lemma \ref{lemma3.1}, we can obtain the following result.
\begin{theorem}\label{th3.1}
$x(t)$ is a solution of the neural networks (\ref{eq2.4}) with initial value $x(0)=x_0$, if and only if $x(t)$ is a solution of the following fractional integral equation:
\begin{equation}\label{eq3.1}
x(t)=
  \left\{
\begin{array}{l}
x_0-\frac{1}{\Gamma(\alpha)}\int_{0}^t (t-s)^{\alpha-1}x(s)ds+\frac{1}{\Gamma(\alpha)}\int_{0}^t (t-s)^{\alpha-1}P_{K}(x(s) -\rho Ax(s)-\rho b)ds,\ t\in [0,t_1], \\
x_0+U_1(x(t_1^{-}))-\frac{1}{\Gamma(\alpha)}\int_{0}^t (t-s)^{\alpha-1}x(s)ds\\
     ~~+\frac{1}{\Gamma(\alpha)}\int_{0}^t (t-s)^{\alpha-1}P_{K}(x(s) -\rho Ax(s)-\rho b)ds, t\in (t_1,t_2], \\
x_0+U_1(x(t_1^{-}))+U_2(x(t_2^{-}))-\frac{1}{\Gamma(\alpha)}\int_{0}^t (t-s)^{\alpha-1}x(s)ds \\
    ~~ +\frac{1}{\Gamma(\alpha)}\int_{0}^t (t-s)^{\alpha-1}P_{K}(x(s) -\rho Ax(s)-\rho b)ds,\ t\in (t_2,t_3], \\
\vdots \\
x_0+\sum \limits_{j=1}^{k}U_{j}(x(t_j^{-}))-\frac{1}{\Gamma(\alpha)}\int_{0}^t (t-s)^{\alpha-1}x(s)ds\\
    ~~ +\frac{1}{\Gamma(\alpha)}\int_{0}^t (t-s)^{\alpha-1}P_{K}(x(s) -\rho Ax(s)-\rho b)ds,\ t\in (t_k,t_{k+1}],\\
\vdots \\
x_0+\sum \limits_{j=1}^{m}U_{j}(x(t_j^{-}))-\frac{1}{\Gamma(\alpha)}\int_{0}^t (t-s)^{\alpha-1}x(s)ds\\
   ~~+\frac{1}{\Gamma(\alpha)}\int_{0}^t (t-s)^{\alpha-1}P_{K}(x(s) -\rho Ax(s)-\rho b)ds,\ t\in (t_m,T].
\end{array}
\right.
   \end{equation}
\end{theorem}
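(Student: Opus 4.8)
The plan is to establish the equivalence one subinterval at a time along the partition $0=t_0<t_1<\cdots<t_m<t_{m+1}=T$, invoking Lemma \ref{lemma3.1}, which is itself an ``if and only if'' statement; hence matching the two sides of (\ref{eq2.4}) and (\ref{eq3.1}) on each piece yields both implications at once. Throughout I would abbreviate
\[
h(s):=-x(s)+P_{K}\bigl(x(s)-\rho Ax(s)-\rho b\bigr),
\]
which is continuous on each $(t_k,t_{k+1}]$ as soon as $x$ is (because $P_{K}$ is non-expansive, hence continuous, by Lemma \ref{lemma2.3}) and is therefore integrable on all of $J$; note that the two integral terms in every branch of (\ref{eq3.1}) combine into $\frac{1}{\Gamma(\alpha)}\int_0^t(t-s)^{\alpha-1}h(s)ds$.

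First I would dispose of the base interval $[0,t_1]$, on which no impulse acts: there $x(0)=x_0$ and $\leftidx{_0^C}D{_t^\alpha}x(t)=h(t)$, so the classical equivalence between a Caputo Cauchy problem and its Volterra integral form yields $x(t)=x_0+\frac{1}{\Gamma(\alpha)}\int_0^t(t-s)^{\alpha-1}h(s)ds$, which is exactly the $t\in[0,t_1]$ branch of (\ref{eq3.1}). For the inductive step, assume the corresponding branch of (\ref{eq3.1}) has already been shown equivalent to (\ref{eq2.4}) on $(t_{k-1},t_k]$; letting $t\uparrow t_k$ in that continuous expression gives
\[
x(t_k^-)=x_0+\sum_{j=1}^{k-1}U_j\bigl(x(t_j^-)\bigr)+\frac{1}{\Gamma(\alpha)}\int_0^{t_k}(t_k-s)^{\alpha-1}h(s)ds,
\]
while the impulse condition in (\ref{eq2.4}) together with the convention $x(t_k^-)=x(t_k)$ from Definition \ref{def2.5} gives $x(t_k^+)=x(t_k^-)+U_k(x(t_k^-))$. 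On $(t_k,t_{k+1}]$ the state again satisfies $\leftidx{_0^C}D{_t^\alpha}x(t)=h(t)$, now with the value $x(t_k^+)$ prescribed at $a=t_k>0$, so Lemma \ref{lemma3.1} with $u_0=x(t_k^+)$ shows this to be equivalent to
\[
x(t)=x(t_k^+)-\frac{1}{\Gamma(\alpha)}\int_0^{t_k}(t_k-s)^{\alpha-1}h(s)ds+\frac{1}{\Gamma(\alpha)}\int_0^t(t-s)^{\alpha-1}h(s)ds.
\]
Substituting the two preceding displays collapses the first two terms to $x_0+\sum_{j=1}^{k}U_j(x(t_j^-))$, and splitting the remaining integral of $h$ back into its $x(s)$- and $P_{K}$-parts reproduces precisely the $t\in(t_k,t_{k+1}]$ branch of (\ref{eq3.1}). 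Every step here is reversible — Lemma \ref{lemma3.1} is an equivalence and the substitutions are purely algebraic — so the induction yields both directions simultaneously, and the case $k=m$ closes the argument on $(t_m,T]$.

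The points I expect to require care are: (i) Lemma \ref{lemma3.1} is stated for continuous $h$, whereas here $h$ is only piecewise continuous on $J$, so I would note that its proof uses nothing beyond the fact that $\leftidx{_0^C}D{_t^\alpha}$ inverts the Riemann--Liouville integral of an integrable function, which still applies; (ii) the Caputo operator in (\ref{eq2.4}) keeps the fixed lower limit $0$ for every $t$, including $t>t_k$, which is exactly why the correction term $-\frac{1}{\Gamma(\alpha)}\int_0^{t_k}(t_k-s)^{\alpha-1}h(s)ds$ in Lemma \ref{lemma3.1} is present and why the representation must carry the cumulative jump sum $\sum_{j=1}^{k}U_j(x(t_j^-))$ rather than a single jump; and (iii) the bookkeeping of the one-sided limits through the convention $x(t_k^-)=x(t_k)$. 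Of these, correctly telescoping the accumulated jumps in the inductive step is the main obstacle, while the remaining verifications become routine once the correct value $x(t_k^+)$ is fed into Lemma \ref{lemma3.1} at each stage.
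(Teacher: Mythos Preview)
Your proposal is correct and follows essentially the same approach as the paper: apply Lemma \ref{lemma3.1} on each subinterval $(t_k,t_{k+1}]$ with $a=t_k$ and $u_0=x(t_k^+)$, then substitute the expression for $x(t_k^-)$ obtained on the previous interval to telescope the jump terms into $x_0+\sum_{j=1}^{k}U_j(x(t_j^-))$. The only organizational differences are that you package the argument as a formal induction and exploit the biconditional nature of Lemma \ref{lemma3.1} to treat both directions at once, whereas the paper writes out $k=1,2$ explicitly and separates necessity from sufficiency; neither distinction is substantive.
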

\begin{proof}
Necessity: Let $x(t)$ is a solution of the neural networks (\ref{eq2.4}) with initial value $x(0)=x_0$. For $t\in[0,t_1]$, it follows that
\begin{equation}\label{eq3.2}
\leftidx{_{0}^C}D{_t^\alpha}x(t)=-x(t)+P_{K}(x(t) -\rho Ax(t)-\rho b),\ t\in[0,t_1].
\end{equation}
Integrating  (\ref{eq3.2}) from $0$ to $t$ by virtue of Definition \ref{def2.1}, one can have
$$x(t)=x_0-\frac{1}{\Gamma(\alpha)}\int_{0}^t (t-s)^{\alpha-1}x(s)ds+\frac{1}{\Gamma(\alpha)}\int_{0}^t (t-s)^{\alpha-1}P_{K}(x(s) -\rho Ax(s)-\rho b)ds.$$
If $t\in(t_1,t_2]$, then
 $$\leftidx{_{0}^C}D{_t^\alpha}x(t)=-x(t)+P_{K}(x(t) -\rho Ax(t)-\rho b),\ t\in(t_1,t_2]  \quad with  \quad  x(t_1^{+})= x(t_1^{-})+U_1(x(t_1^{-})).$$
By the means of Lemma \ref{lemma3.1}, one obtain
\begin{align*}
x(t)&=x(t_1^{+})-\frac{1}{\Gamma(\alpha)}\int_{0}^{t_1} (t_1-s)^{\alpha-1}[-x(s)+P_{K}\left(x(s) -\rho Ax(s)-\rho b)\right]ds\notag\\
   & \quad +\frac{1}{\Gamma(\alpha)}\int_{0}^t (t-s)^{\alpha-1}[-x(s)+P_{K}(x(s) -\rho Ax(s)-\rho b)]ds\notag\\
  &=x(t_1^{-})+U_{1}(x(t_1^{-})) -\frac{1}{\Gamma(\alpha)}\int_{0}^{t_1} (t_1-s)^{\alpha-1}[-x(s)+P_{K}(x(s) -\rho Ax(s)-\rho b)]ds\notag\\
   & \quad +\frac{1}{\Gamma(\alpha)}\int_{0}^t (t-s)^{\alpha-1}[-x(s)+P_{K}(x(s) -\rho Ax(s)-\rho b)]ds\notag\\
  &=x_0+U_1(x(t_1^{-}))+\frac{1}{\Gamma(\alpha)}\int_{0}^t (t-s)^{\alpha-1}[-x(s)+P_{K}(x(s) -\rho Ax(s)-\rho b)]ds \notag\\
  &=x_0+U_1(x(t_1^{-}))-\frac{1}{\Gamma(\alpha)}\int_{0}^t (t-s)^{\alpha-1}x(s)ds+\frac{1}{\Gamma(\alpha)}\int_{0}^t (t-s)^{\alpha-1}P_{K}(x(s) -\rho Ax(s)-\rho b)ds.
\end{align*}
If $t\in(t_2,t_3]$, then using again Lemma \ref{lemma3.1}, we get
\begin{align*}
x(t)&=x(t_2^{+})-\frac{1}{\Gamma(\alpha)}\int_{0}^{t_2} (t_2-s)^{\alpha-1}[-x(s)+P_{K}(x(s) -\rho Ax(s)-\rho b)]ds\notag\\
   & \quad +\frac{1}{\Gamma(\alpha)}\int_{0}^t (t-s)^{\alpha-1}[-x(s)+P_{K}(x(s) -\rho Ax(s)-\rho b)]ds\notag\\
  &=x(t_2^{-})+U_{2}(x(t_2^{-})) -\frac{1}{\Gamma(\alpha)}\int_{0}^{t_2} (t_2-s)^{\alpha-1}(s)[-x(s)+P_{K}(x(s) -\rho Ax(s)-\rho b)]ds\notag\\
   & \quad +\frac{1}{\Gamma(\alpha)}\int_{0}^t (t-s)^{\alpha-1}[-x(s)+P_{K}(x(s) -\rho Ax(s)-\rho b)]ds\notag\\
  &=x_0+U_1(x(t_1^{-}))+U_2(x(t_2^{-}))+\frac{1}{\Gamma(\alpha)}\int_{0}^t (t-s)^{\alpha-1}[-x(s)+P_{K}(x(s) -\rho Ax(s)-\rho b)]ds \notag\\
  &=x_0+U_1(x(t_1^{-}))+U_2(x(t_2^{-}))-\frac{1}{\Gamma(\alpha)}\int_{0}^t (t-s)^{\alpha-1}x(s)ds\notag\\
  & \quad +\frac{1}{\Gamma(\alpha)}\int_{0}^t (t-s)^{\alpha-1}P_{K}(x(s) -\rho Ax(s)-\rho b)ds.
\end{align*}
If $t\in(t_k,t_{k+1}]$, $k=3,4,\cdots,m$, then again from Lemma \ref{lemma3.1}, we get
$$x(t)=x_0+\sum \limits_{j=1}^{k}U_{j}(x(t_j^{-}))-\frac{1}{\Gamma(\alpha)}\int_{0}^t (t-s)^{\alpha-1}x(s)ds+\frac{1}{\Gamma(\alpha)}\int_{0}^t (t-s)^{\alpha-1}P_{K}(x(s) -\rho Ax(s)-\rho b)ds.$$
Sufficiency: Assume that $x(t)$ satisfies (\ref{eq3.1}). If $t\in[0,t_1]$ then $x(0)=x_0$ and using that $\leftidx{_{0}^C}D{_t^\alpha}$ is the left inverse of
$\leftidx{_{0}}D{_t^{-\alpha}}$, we get (\ref{eq3.2}). If $t\in(t_k,t_{k+1}]$, $k=1,2,\cdots,m$ and using the fact of the Caputo fractional derivative of a constant is equal to zero, we obtain
$$\leftidx{_{0}^C}D{_t^\alpha}x(t)=-x(t)+P_{K}[x(t) -\rho Ax(t)-\rho b],\quad t\in(t_k,t_{k+1}]$$
and $x(t_k^{+})-x(t_k^{-})=U_{k}(x(t_k)).$ This completes the proof.
\end{proof}

Next, we will show the existence result concerned with the solution of (\ref{eq2.4})
by employing the Sadovskii fixed point theorem. To this end, we need the following
hypotheses:

$[H]: U_{k}:R^{n} \longrightarrow R^{n}$  and there exist $l_1,l_2>0$ such that
$$\|U_{k}(x)\|\leq l_1 ~ \text{and} ~ \|U_{k}(x)-U_{k}(y)\|\leq l_2\|x-y\|, ~ \text{for all} ~x,y \in PC(J,R^n).$$

\begin{theorem}\label{th3.2}
Under the assumption $[H]$, the neural networks (\ref{eq2.4}) has at least one  solution $x\in PC(J,R^n)$, provided that $\frac{1+\|I-\rho A\|}{\Gamma(\alpha+1)}T^{\alpha}<1$ and $ml_2+\frac{T^{\alpha}}{\Gamma(\alpha+1)}<1$.
\end{theorem}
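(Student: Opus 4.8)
The plan is to apply the Sadovskii fixed point theorem (Lemma~\ref{lemma2.7}) to the integral operator $N:PC(J,R^n)\to PC(J,R^n)$ naturally suggested by the fixed-point formulation \eqref{eq3.1}. For $t\in(t_k,t_{k+1}]$ we set
\begin{equation*}
(Nx)(t)=x_0+\sum_{j=1}^{k}U_j(x(t_j^-))-\frac{1}{\Gamma(\alpha)}\int_0^t(t-s)^{\alpha-1}x(s)\,ds+\frac{1}{\Gamma(\alpha)}\int_0^t(t-s)^{\alpha-1}P_K\bigl(x(s)-\rho Ax(s)-\rho b\bigr)\,ds,
\end{equation*}
with the first branch ($k=0$, no sum) covering $[0,t_1]$. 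By Theorem~\ref{th3.1} the fixed points of $N$ are exactly the solutions of \eqref{eq2.4}, so it suffices to produce a fixed point. First I would decompose $N=F_1+F_2$, where $F_1x$ collects the two fractional-integral terms (the ``$-x$'' part and the $P_K$ part) and $F_2x$ collects $x_0+\sum_{j\le k}U_j(x(t_j^-))$. The idea is that $F_1$ is a contraction while $F_2$ is compact, so Lemma~\ref{lemma2.6} gives that $N=F_1+F_2$ is condensing.

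Next I would verify the hypotheses of Lemma~\ref{lemma2.6} and Lemma~\ref{lemma2.7} in order. (a) \emph{Self-map on a ball.} Using the non-expansiveness of $P_K$ (Lemma~\ref{lemma2.3}) one estimates $\|P_K(x(s)-\rho Ax(s)-\rho b)\|\le \|x(s)-\rho Ax(s)-\rho b - P_K(0)\| + \|P_K(0)\| \le \|I-\rho A\|\,\|x(s)\| + \rho\|b\| + \|P_K(0)\|$, and together with $\|U_k(x)\|\le l_1$ from $[H]$ one bounds $\|(Nx)(t)\|$ by $\|x_0\|+ml_1 + \frac{(1+\|I-\rho A\|)T^\alpha}{\Gamma(\alpha+1)}\|x\|_{PC} + \frac{(\rho\|b\|+\|P_K(0)\|)T^\alpha}{\Gamma(\alpha+1)}$; since the coefficient of $\|x\|_{PC}$ is $<1$ by hypothesis, a sufficiently large closed ball $Q=\{x\in PC(J,R^n):\|x\|_{PC}\le r\}$ is mapped into itself. $Q$ is convex, bounded and closed. (b) \emph{$F_1$ is a contraction.} For $x,y$, again by Lemma~\ref{lemma2.3}, $\|P_K(x(s)-\rho Ax(s)-\rho b)-P_K(y(s)-\rho Ay(s)-\rho b)\|\le\|I-\rho A\|\,\|x(s)-y(s)\|$, so $\|(F_1x)(t)-(F_1y)(t)\|\le\frac{(1+\|I-\rho A\|)T^\alpha}{\Gamma(\alpha+1)}\|x-y\|_{PC}$, and the constant $k:=\frac{1+\|I-\rho A\|}{\Gamma(\alpha+1)}T^\alpha<1$ by assumption. (c) \emph{$F_2$ is compact.} $F_2$ maps into a finite-dimensional affine combination of values $U_j(x(t_j^-))$; it is continuous by the Lipschitz bound in $[H]$, its image is bounded by $\|x_0\|+ml_1$, and equicontinuity on each $(t_k,t_{k+1}]$ is immediate because $F_2x$ is piecewise constant in $t$ — hence $F_2(Q)$ is relatively compact by a piecewise Arzelà--Ascoli argument. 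Then Lemma~\ref{lemma2.6} makes $N$ condensing on $Q$, and Lemma~\ref{lemma2.7} yields a fixed point $x\in Q$, which by Theorem~\ref{th3.1} is a solution of \eqref{eq2.4}.

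One technical point needs care: $PC(J,R^n)$ with the sup-norm is a Banach space, but compactness there is characterized by the piecewise Arzelà--Ascoli theorem, so for $F_1(Q)$ (which I do \emph{not} claim compact) I only need Lipschitz continuity, whereas for $F_2(Q)$ I must check relative compactness piecewise on each subinterval $(t_k,t_{k+1}]$ together with the existence of one-sided limits at the $t_k$. I would also remark that the second hypothesis $ml_2+\frac{T^\alpha}{\Gamma(\alpha+1)}<1$ is not needed for this existence theorem (it is the contraction constant for the \emph{uniqueness} proof via the Banach fixed point theorem mentioned after the statement), so in the existence argument I would only invoke $\frac{1+\|I-\rho A\|}{\Gamma(\alpha+1)}T^\alpha<1$. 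The main obstacle I anticipate is purely bookkeeping: carrying the impulsive jump terms $\sum_{j\le k}U_j(x(t_j^-))$ correctly through the norm estimates across all $m+1$ subintervals, and making sure the continuity/equicontinuity arguments for $F_2$ respect the $PC$ structure (left-continuity at $t_k$, existence of right limits). Everything else reduces to the routine fractional-integral bound $\frac{1}{\Gamma(\alpha)}\int_0^t(t-s)^{\alpha-1}\,ds=\frac{t^\alpha}{\Gamma(\alpha+1)}\le\frac{T^\alpha}{\Gamma(\alpha+1)}$.
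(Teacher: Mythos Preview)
Your proposal is correct but uses a genuinely different decomposition from the paper's. The paper splits $\Phi=F+G$ with
\[
(Fx)(t)=x_0+\sum_{j\le k}U_j(x(t_j^-))-\tfrac{1}{\Gamma(\alpha)}\!\int_0^t(t-s)^{\alpha-1}x(s)\,ds,\qquad
(Gx)(t)=\tfrac{1}{\Gamma(\alpha)}\!\int_0^t(t-s)^{\alpha-1}P_K(\cdots)\,ds,
\]
i.e.\ it groups the \emph{linear} integral term with the impulsive jumps and leaves only the projection integral in $G$. With that grouping the contraction constant of $F$ is $ml_2+\tfrac{T^\alpha}{\Gamma(\alpha+1)}$, which is exactly the second hypothesis in the statement, while compactness of $G$ is obtained by an Arzel\`a--Ascoli equicontinuity estimate for the fractional integral. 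Your split $N=F_1+F_2$ instead puts \emph{both} integral terms into $F_1$ and only the piecewise-constant impulse part into $F_2$; then $F_1$ is a contraction with constant $\tfrac{(1+\|I-\rho A\|)T^\alpha}{\Gamma(\alpha+1)}$ (the first hypothesis) and $F_2$ is compact for the elementary reason that its range sits in a finite-dimensional subspace of $PC(J,R^n)$. Your observation that the second hypothesis $ml_2+\tfrac{T^\alpha}{\Gamma(\alpha+1)}<1$ becomes superfluous is therefore correct for \emph{your} decomposition; in the paper it is genuinely used because of how $F$ is chosen. In short, your route is slightly sharper (one hypothesis instead of two) and the compactness step is more transparent, while the paper's route keeps the projection term isolated at the cost of an extra smallness condition.
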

\begin{proof}
Consider an operator $\Phi:PC(J,R^n)\rightarrow PC(J,R^n)$ defined by
\begin{align}\label{eq3.3}
(\Phi x)(t)&=x_0+\sum \limits_{j=1}^{k}U_{j}(x(t_j^{-}))-\frac{1}{\Gamma(\alpha)}\int_{0}^t (t-s)^{\alpha-1}x(s)ds\notag\\
           & \quad+\frac{1}{\Gamma(\alpha)}\int_{0}^t (t-s)^{\alpha-1}P_{K}(x(s) -\rho Ax(s)-\rho b)ds, t\in (t_{k},t_{k+1}],k=1,2,\cdots,m.
\end{align}
We define two operators as follows:

$F:PC(J,R^n)\rightarrow PC(J,R^n)$ given by
$$(Fx)(t)=x_0+\sum \limits_{j=1}^{k}U_{j}(x(t_{j}))-\frac{1}{\Gamma(\alpha)}\int_{0}^t (t-s)^{\alpha-1}x(s)ds, t\in (t_{k},t_{k+1}],k=1,2,\cdots,m.$$

$G:PC(J,R^n)\rightarrow PC(J,R^n)$ given by
$$(Gx)(t)=\frac{1}{\Gamma(\alpha)}\int_{0}^t (t-s)^{\alpha-1}P_{K}(x(s) -\rho Ax(s)-\rho b)ds.$$

Clearly, $\Phi=F+G.$

Let $B_{\delta}$ be a bounded closed and convex subset of $PC(J,R^n)$ and is defined as $B_{\delta}= \{x: \|x\|\leq \delta\}$, where
\begin{equation}\label{eq0}
\delta \geq\frac{\|x_0\|+ml_1+\frac{\|\rho b\|+\|\psi\|}{\Gamma(\alpha+1)}T^{\alpha}}{1-\frac{1+\|I-\rho A\|}{\Gamma(\alpha+1)}T^{\alpha}}, \psi\in K.
\end{equation}
Next, we prove that $\Phi$ has at least one fixed point. The proof is divided into
three steps.

Step 1. $\Phi$ is self mapping. Now we show that $\Phi(B_{\delta})\subset B_{\delta}$.

For any $x\in B_{\delta}$. If $t\in[0,t_1]$,  we get
\begin{equation}\label{eq3.4}
\big\|(\Phi x)(t) \big\| \leq \|x_0\|+\frac{1}{\Gamma(\alpha)} \int_{0}^{t} (t-s)^{\alpha-1} \|x(s)\|ds+\frac{1}{\Gamma(\alpha)}\int_{0}^{t} (t-s)^{\alpha-1}\big\|P_{K}(x(s) -\rho Ax(s)-\rho b)\big\|ds.
\end{equation}

According to the definition of projection operator, since $K$ is not empty, we can choose a point $\psi\in K$, which makes the following inequality hold:
$$\|P_{K}(0)\|\leq \|\psi\|.$$
By Lemma \ref{lemma2.3}, we have
\begin{align}\label{eq3.5}
\big\|P_{K}(x(s)-\rho Ax(s)-\rho b)\big\|&=\big\|P_{K}(x(s) -\rho Ax(s)-\rho b)-P_{K}(0)+P_{K}(0)\big\| \notag\\
&\leq \big\|P_{K}(x(s) -\rho Ax(s)-\rho b)-P_{K}(0)\big\|+\big\|P_{K}(0)\big\| \notag\\
&\leq \big\| x(s)-\rho Ax(s)-\rho b  \big\|+\|\psi\| \notag\\
&=\big\|(I-\rho A)x(s)-\rho b \big\|+\|\psi\|  \notag\\
&\leq\|I-\rho A\|\|x(s)\|+\|\rho b\|+\|\psi\|,
\end{align}
where $I$ stands for the identity matrix with appropriate dimension.

It follows from (\ref{eq0}), (\ref{eq3.4}) and (\ref{eq3.5}) that
\begin{align}\label{eq3.6}
\big\|(\Phi x)(t) \big\| &\leq  \|x_0\|+\frac{1}{\Gamma(\alpha)} \int_{0}^{t} (t-s)^{\alpha-1} \|x(s)\|ds\notag\\
        & \quad+\frac{1}{\Gamma(\alpha)}\int_{0}^{t} (t-s)^{\alpha-1}(\|I-\rho A\|\|x(s)\|+\|\rho b\|+\|\psi\|)ds\notag\\
& =\|x_0\|+\frac{1+\|I-\rho A\|}{\Gamma(\alpha)} \int_{0}^{t} (t-s)^{\alpha-1} \|x(s)\|ds+\frac{\|\rho b\|+\|\psi\|}{\Gamma(\alpha)}\int_{0}^{t} (t-s)^{\alpha-1}ds\notag\\
     &\leq \|x_0\|+\frac{(1+\|I-\rho A\|)\delta+\|\rho b\|+\|\psi\|}{\Gamma(\alpha+1)}T^{\alpha}\leq\delta.
\end{align}
If $t\in(t_1,t_2]$, by (\ref{eq0}), (\ref{eq3.4}) and (\ref{eq3.5}), we have
\begin{align}\label{eq3.7}
\big\|(\Phi x)(t) \big\| &\leq \|x_0\|+\|U_1(x(t_1^{-}))\|+\frac{1}{\Gamma(\alpha)} \int_{0}^{t} (t-s)^{\alpha-1} \|x(s)\|ds\notag\\
    & \quad+\frac{1}{\Gamma(\alpha)}\int_{0}^{t} (t-s)^{\alpha-1}\big\|P_{K}(x(s) -\rho Ax(s)-\rho b)\big\|ds\notag\\
&\leq  \|x_0\|+\|U_1(x(t_1^{-}))\|+\frac{1}{\Gamma(\alpha)} \int_{0}^{t} (t-s)^{\alpha-1} \|x(s)\|ds\notag\\
       & \quad+\frac{1}{\Gamma(\alpha)}\int_{0}^{t} (t-s)^{\alpha-1}(\|(I-\rho A)\|\|x(s)\|+\|\rho b\|+\|\psi\|)ds\notag\\
& =\|x_0\|+l_1+\frac{1+\|I-\rho A\|}{\Gamma(\alpha)} \int_{0}^{t} (t-s)^{\alpha-1} \|x(s)\|ds+\frac{\|\rho b\|+\|\psi\|}{\Gamma(\alpha)}\int_{0}^{t} (t-s)^{\alpha-1}ds\notag\\
     &\leq \|x_0\|+l_1+\frac{(1+\|I-\rho A\|)\delta+\|\rho b\|+\|\psi\|}{\Gamma(\alpha+1)}T^{\alpha} \leq\delta.
\end{align}
If $t\in(t_k,t_{k+1}],k=3,4,\cdots,m$, using the same argument, we get
\begin{align}\label{eq3.7}
\big\|(\Phi x)(t) \big\| &\leq \|x_0\|+\sum \limits_{j=1}^{m}\|U_{j}(x(t_j^{-}))\|+\frac{1}{\Gamma(\alpha)} \int_{0}^{t} (t-s)^{\alpha-1} \|x(s)\|ds\notag\\
      & \quad+\frac{1}{\Gamma(\alpha)}\int_{0}^{t} (t-s)^{\alpha-1}\big\|P_{K}(x(s) -\rho Ax(s)-\rho b)\big\|ds\notag\\
&\leq  \|x_0\|+ml_1+\frac{1}{\Gamma(\alpha)} \int_{0}^{t} (t-s)^{\alpha-1} \|x(s)\|ds\notag\\
       & \quad+\frac{1}{\Gamma(\alpha)}\int_{0}^{t} (t-s)^{\alpha-1}(\|I-\rho A\|\|x(s)\|+\|\rho b\|+\|\psi\|)ds\notag\\
& =\|x_0\|+ml_1+\frac{1+\|I-\rho A\|}{\Gamma(\alpha)} \int_{0}^{t} (t-s)^{\alpha-1} \|x(s)\|ds+\frac{\|\rho b\|+\|\psi\|}{\Gamma(\alpha)}\int_{0}^{t} (t-s)^{\alpha-1}ds\notag\\
     &\leq \|x_0\|+ml_1+\frac{(1+\|I-\rho A\|)\delta+\|\rho b\|+\|\psi\|}{\Gamma(\alpha+1)}T^{\alpha}\leq\delta,
\end{align}
which implies that $\Phi(B_{\delta})\subset B_{\delta}.$

Step 2 $F$ is $k_1$-contraction.
If $t\in[0,t_1]$, for $x,y\in PC(J,R^n)$, we have
\begin{align*}
\|(Fx)(t)-(Fy)(t)\|&\leq \frac{1}{\Gamma(\alpha)} \int_{0}^{t} (t-s)^{\alpha-1} \|x(s)-y(s)\|ds\\
                   &\leq \frac{1}{\Gamma(\alpha)} \int_{0}^{t} (t-s)^{\alpha-1} ds\|x-y\|\\
                     &\leq \frac{T^{\alpha}}{\Gamma(\alpha+1)}\|x-y\|.
\end{align*}
If $t\in(t_1,t_2]$,  for $x,y\in PC(J,R^n)$, we have
\begin{align*}
\|(Fx)(t)-(Fy)(t)\|&\leq \|U_{1}(x(t_1^-))-U_{1}(y(t_1^-))\|+\frac{1}{\Gamma(\alpha)} \int_{0}^{t} (t-s)^{\alpha-1} \|x(s)-y(s)\|ds\\
                     &\leq \left(l_2+\frac{T^{\alpha}}{\Gamma(\alpha+1)}\right)\|x-y\|.
\end{align*}
If $t\in(t_k,t_{k+1}],k=3,4,\cdots,m$, for $x,y\in PC(J,R^n)$, using the same argument, we get,
\begin{align*}
\|(Fx)(t)-(Fy)(t)\|&\leq \sum \limits_{j=1}^{m}\|U_{j}(x(t_j^-))-U_{j}(y(t_j^-))\|+\frac{1}{\Gamma(\alpha)} \int_{0}^{t} (t-s)^{\alpha-1} \|x(s)-y(s)\|ds\\
                     &\leq \left(ml_2+\frac{T^{\alpha}}{\Gamma(\alpha+1)}\right)\|x-y\|\\
                     &= k_1\|x-y\|,
\end{align*}
where
$$
k_1=ml_2+\frac{T^{\alpha}}{\Gamma(\alpha+1)}.
$$

Thus, for $t\in[0,T]$, $x,y\in PC(J,R^n)$, one has
$$\|Fx-Fy\|_{PC}=\sup_{t\in J}\big\|(Fx)(t)-(Fy)(t)\big\|\leq k_1\|x-y\|_{PC},$$
and so $F$ is a contraction operator on $PC(J,R^n)$ due to $0<k_1<1$.

Step 3 $G$ is compact.

Claim 1. $G(B_{\delta})$ is bounded.

 For any $t\in [0,T]$, for $x\in B_{\delta}$, by (\ref{eq3.5}),we get
\begin{align*}
\|(Gx)(t)\|&\leq \frac{1}{\Gamma(\alpha)} \int_{0}^{t} (t-s)^{\alpha-1} \|P_{K}(x_n(s) -\rho Ax_n(s)-\rho b)\|ds\\
                   &\leq \frac{1}{\Gamma(\alpha)} \int_{0}^{t} (t-s)^{\alpha-1}(\|I-\rho A\|\|x_n(s)\|+\|\rho b\|+\|\psi\|)ds.
\end{align*}
Thus, one has
\begin{align*}
\|Gx\|&\leq\frac{T^{\alpha}}{\Gamma(\alpha+1)}(\|I-\rho A\|\|x\|+\|\rho b\|+\|\psi\|)\\
                   &\leq \frac{T^{\alpha}}{\Gamma(\alpha+1)}(\|I-\rho A\|\delta+\|\rho b\|+\|\psi\|)
\end{align*}
and so $G(B_{\delta})$ is bounded.

Claim 2. $Gx$ is equicontinuous.

For $0\leq\tau_1<\tau_2<T, x\in B_{\delta}$, based on (\ref{eq3.5}), we get
\begin{align*}
\|(Gx)(\tau_1)-(Gx)(\tau_2)\|&=\big\|\frac{1}{\Gamma(\alpha)} \int_{0}^{\tau_1} (\tau_1-s)^{\alpha-1} P_{K}(x(s) -\rho Ax(s)-\rho b)ds\\
                      & \quad -\frac{1}{\Gamma(\alpha)} \int_{0}^{\tau_2} (\tau_2-s)^{\alpha-1} P_{K}(x(s) -\rho Ax(s)-\rho b)ds\big\|\\
                   &\leq \frac{1}{\Gamma(\alpha)} \int_{0}^{\tau_1} [(\tau_1-s)^{\alpha-1}-(\tau_2-s)^{\alpha-1}]\|P_{K}(x(s) -\rho Ax(s)-\rho b)\| ds\\
                    & \quad +\frac{1}{\Gamma(\alpha)} \int_{\tau_1}^{\tau_2} (\tau_2-s)^{\alpha-1}\|P_{K}(x(s) -\rho Ax(s)-\rho b)\| ds\\
                    &\leq \frac{1}{\Gamma(\alpha)} \int_{0}^{\tau_1} [(\tau_1-s)^{\alpha-1}-(\tau_2-s)^{\alpha-1}]\left(\|I-\rho A\|\|x_n(s)\|+\|\rho b\|+\|\psi\|\right)ds\\
                    & \quad +\frac{1}{\Gamma(\alpha)} \int_{\tau_1}^{\tau_2} (\tau_2-s)^{\alpha-1}(\|I-\rho A\|\|x_n(s)\|+\|\rho b\|+\|\psi\|) ds\\
                    &\leq \frac{\|I-\rho A\|\eta+\|\rho b\|+\|\psi\|}{\Gamma(\alpha)}\left(\frac{\tau_1^{\alpha}}{\alpha}-\frac{\tau_2^{\alpha}}{\alpha}+\frac{(\tau_2-\tau_1)^{\alpha}}{\alpha}+\frac{(\tau_2-\tau_1)^{\alpha}}{\alpha}\right)\\
                    &\leq \frac{2(\|I-\rho A\|\eta+\|\rho b\|+\|\psi\|)(\tau_2-\tau_1)^{\alpha}}{\alpha\Gamma(\alpha)}.
\end{align*}
As $\tau_2\rightarrow\tau_1$, the right-hand side of the above inequality tends to zero. Therefore, $Gx$ is equicontinuous. Thus using Arzela-Ascoli theorem for equicontinuous functions, we conclude that
$G(B_{\delta})$ is relatively compact.

As $\Phi=F+G$, $F$ is continuous and $\kappa_1$-contraction, $G$ is compact, so by using Lemma \ref{lemma2.6}, $\Phi$ is condensing map. Therefore, we show that mapping $\Phi$ satisfy all the conditions of sadvoskii fixed point theorem. Consequently, by Lemma \ref{lemma2.7} we deduce that $\Phi$ has at least one
fixed point. This completes the proof.
\end{proof}

\begin{theorem}\label{th3.2}
Under the assumption $[H]$, the neural networks (\ref{eq2.4}) has a unique solution $x\in PC(J,R^n)$, provided that $m l_2+\frac{1+\|I-\rho A\|}{\Gamma(\alpha+1)}T^{\alpha}<1$.
\end{theorem}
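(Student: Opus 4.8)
The plan is to apply the Banach contraction principle to the integral operator $\Phi$ defined in \eqref{eq3.3}, exploiting the fact that, by Theorem~\ref{th3.1}, a function $x\in PC(J,R^n)$ is a solution of \eqref{eq2.4} if and only if it is a fixed point of $\Phi$. Since $PC(J,R^n)$ endowed with $\|\cdot\|_{PC}$ is a Banach space, it suffices to show that $\Phi$ is a contraction on the whole space under the hypothesis $ml_2+\frac{1+\|I-\rho A\|}{\Gamma(\alpha+1)}T^\alpha<1$; the uniqueness of the fixed point, and hence of the solution, then follows at once.

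First I would fix $x,y\in PC(J,R^n)$ and estimate $\|(\Phi x)(t)-(\Phi y)(t)\|$ on an arbitrary subinterval $(t_k,t_{k+1}]$, $k=0,1,\dots,m$. The constant $x_0$ cancels and the difference splits into three contributions: the impulsive sum $\sum_{j=1}^{k}\big(U_j(x(t_j^-))-U_j(y(t_j^-))\big)$, the linear memory term $-\frac{1}{\Gamma(\alpha)}\int_0^t(t-s)^{\alpha-1}\big(x(s)-y(s)\big)ds$, and the projection term $\frac{1}{\Gamma(\alpha)}\int_0^t(t-s)^{\alpha-1}\big(P_K(x(s)-\rho Ax(s)-\rho b)-P_K(y(s)-\rho Ay(s)-\rho b)\big)ds$. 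For the first, the Lipschitz estimate in $[H]$ gives $\sum_{j=1}^{k}\|U_j(x(t_j^-))-U_j(y(t_j^-))\|\le kl_2\|x-y\|_{PC}\le ml_2\|x-y\|_{PC}$, uniformly in $k$ because $k\le m$ on every subinterval. For the second, $\frac{1}{\Gamma(\alpha)}\int_0^t(t-s)^{\alpha-1}ds=\frac{t^\alpha}{\Gamma(\alpha+1)}\le\frac{T^\alpha}{\Gamma(\alpha+1)}$, so this term is bounded by $\frac{T^\alpha}{\Gamma(\alpha+1)}\|x-y\|_{PC}$. For the third, the non-expansiveness of $P_K$ (Lemma~\ref{lemma2.3}) yields, for each $s$,
$$\big\|P_K(x(s)-\rho Ax(s)-\rho b)-P_K(y(s)-\rho Ay(s)-\rho b)\big\|\le\big\|(I-\rho A)(x(s)-y(s))\big\|\le\|I-\rho A\|\,\|x(s)-y(s)\|,$$
whence this term is bounded by $\frac{\|I-\rho A\|\,T^\alpha}{\Gamma(\alpha+1)}\|x-y\|_{PC}$.

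Collecting the three estimates and taking the supremum over $t\in J$, I would obtain
$$\|\Phi x-\Phi y\|_{PC}\le\Big(ml_2+\frac{1+\|I-\rho A\|}{\Gamma(\alpha+1)}T^\alpha\Big)\|x-y\|_{PC}=:\kappa\,\|x-y\|_{PC},$$
with $\kappa<1$ by the hypothesis. Hence $\Phi$ is a contraction on $PC(J,R^n)$, and the Banach fixed point theorem provides a unique fixed point $x^\star\in PC(J,R^n)$, which by Theorem~\ref{th3.1} is the unique solution of \eqref{eq2.4}.

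I do not anticipate a genuine obstacle: every inequality needed has already been produced in Steps~1 and~2 of the preceding existence proof and only needs to be reassembled on each impulsive block. The two points deserving a line of care are (i) that the impulsive contribution is bounded \emph{uniformly} by $ml_2$, which holds since the number of jump terms on $(t_k,t_{k+1}]$ never exceeds $m$, and (ii) that no separate invariance argument on a ball is required, because $PC(J,R^n)$ is already complete; should one nonetheless wish to work on $B_\delta$, note that $\kappa<1$ forces $\frac{1+\|I-\rho A\|}{\Gamma(\alpha+1)}T^\alpha<1$, so the self-mapping property $\Phi(B_\delta)\subset B_\delta$ established earlier remains at one's disposal.
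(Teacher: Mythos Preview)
Your proposal is correct and follows essentially the same route as the paper: both apply the Banach fixed point theorem to the operator $\Phi$ of \eqref{eq3.3} by bounding the impulsive, linear, and projection contributions exactly as you describe, arriving at the same contraction constant $\kappa_2=ml_2+\frac{1+\|I-\rho A\|}{\Gamma(\alpha+1)}T^\alpha$. The only addition in the paper is an explicit Step~1 verifying that $\Phi$ maps $PC(J,R^n)$ into itself (continuity of $(\Phi x)(t)$ on each $(t_k,t_{k+1}]$), which you implicitly take for granted.
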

\begin{proof}
Consider an operator $\Phi:PC(J,R^n)\rightarrow PC(J,R^n)$ defined by (\ref{eq3.3}). Next, we will show the uniqueness result concerned with the solution of (\ref{eq2.4})
by employing the Banach fixed point theorem. The proof is divided into
two steps.

Step 1. $\Phi x\in PC(J,R^n)$ for every $x\in PC(J,R^n)$.

Let $\varpi_k=\max\|x(t)\|,t\in (t_k,t_{k+1}], k=0, 1, 2, \cdots,m.$
If $t\in[0,t_1]$, for every $x\in C([0,t_1],R^n)$ and any $\epsilon>0$ satisfy $t<t+\epsilon<t_1$, we get
\begin{align}\label{eq3.9}
&\big\|(\Phi x)(t+\epsilon)-(\Phi x)(t) \big\| \notag\\
=&\bigg\| -\frac{1}{\Gamma(\alpha)}\int_{0}^{t+\epsilon} (t+\epsilon-s)^{\alpha-1}x(s)ds+\frac{1}{\Gamma(\alpha)}\int_{0}^{t+\epsilon} (t+\epsilon-s)^{\alpha-1}P_{K}(x(s) -\rho Ax(s)-\rho b)ds\notag\\
& \quad +\frac{1}{\Gamma(\alpha)}\int_{0}^{t} (t-s)^{\alpha-1}x(s)ds-\frac{1}{\Gamma(\alpha)}\int_{0}^{t} (t-s)^{\alpha-1}P_{K}(x(s) -\rho Ax(s)-\rho b)ds\bigg\| \notag\\
\leq& \frac{1}{\Gamma(\alpha)} \int_{0}^{t} -[(t+\epsilon-s)^{\alpha-1}-(t-s)^{\alpha-1}] \|x(s)\|ds+\frac{1}{\Gamma(\alpha)} \int_{t}^{t+\epsilon} (t+\epsilon-s)^{\alpha-1}\|x(s)\|ds\notag\\
    & \quad +\frac{1}{\Gamma(\alpha)}\int_{0}^{t} -[(t+\epsilon-s)^{\alpha-1}-(t-s)^{\alpha-1}]\big\|P_{K}(x(s) -\rho Ax(s)-\rho b)\big\|ds \notag\\
     & \quad +\frac{1}{\Gamma(\alpha)}\int_{t}^{t+\epsilon} (t+\epsilon-s)^{\alpha-1}\big\|P_{K}(x(s) -\rho Ax(s)-\rho b)\big\|ds.
\end{align}

It follows from (\ref{eq3.5}) and (\ref{eq3.9}) that
\begin{align}\label{eq3.6}
\big\|(\Phi x)(t+\epsilon)-(\Phi x)(t) \big\| &\leq \frac{1}{\Gamma(\alpha)} \int_{0}^{t} -[(t+\epsilon-s)^{\alpha-1}-(t-s)^{\alpha-1}] (1+\|I-\rho A\|)\|x(s)\|ds\notag\\
 & \quad +\frac{1}{\Gamma(\alpha)} \int_{t}^{t+\epsilon} (t+\epsilon-s)^{\alpha-1}(1+\|I-\rho A\|)\|x(s)\|ds\notag\\
    & \quad +\frac{\rho\|b\|+\|\psi\|}{\Gamma(\alpha)}\int_{0}^{t} -[(t+\epsilon-s)^{\alpha-1}-(t-s)^{\alpha-1}]ds \notag\\
     & \quad +\frac{\rho\|b\|+\|\psi\|}{\Gamma(\alpha)}\int_{t}^{t+\epsilon} (t+\epsilon-s)^{\alpha-1}ds\notag\\
     &\leq \frac{(1+\|I-\rho A\|)\varpi_0}{\Gamma(\alpha)} \int_{0}^{t} -[(t+\epsilon-s)^{\alpha-1}-(t-s)^{\alpha-1}] ds\notag\\
    & \quad +\frac{(1+\|I-\rho A\|)\varpi_0}{\Gamma(\alpha)} \int_{t}^{t+\epsilon} (t+\epsilon-s)^{\alpha-1}ds\notag\\
    & \quad +\frac{\rho\|b\|+\|\psi\|}{\Gamma(\alpha)}\int_{0}^{t} -[(t+\epsilon-s)^{\alpha-1}-(t-s)^{\alpha-1}]ds \notag\\
     & \quad +\frac{\rho\|b\|+\|\psi\|}{\Gamma(\alpha)}\int_{t}^{t+\epsilon} (t+\epsilon-s)^{\alpha-1}ds\notag\\
     &=\frac{(1+\|I-\rho A\|)\varpi_0+\|\rho b\|+\|\psi\|}{\Gamma(\alpha)} \left(\frac{\varepsilon^{\alpha}}{\alpha}-\frac{(t+\varepsilon)^{\alpha}}{\alpha}+\frac{t^{\alpha}}{\alpha}\right)\notag\\
     & \quad +\frac{(1+\|I-\rho A\|)\varpi_0+\|\rho b\|+\|\psi\|}{\Gamma(\alpha+1)}\varepsilon^{\alpha}\notag\\
     &\leq 2\frac{(1+\|I-\rho A\|)\varpi_0+\|\rho b\|+\|\psi\|}{\Gamma(\alpha+1)}\varepsilon^{\alpha}.
\end{align}
It is easy to see that the right-hand of the above inequality tends to zero as $\epsilon\rightarrow 0$. Thus, $\Phi x\in C([0,t_1],R^n).$

If $t\in(t_1,t_2]$, for every $x\in C((t_1,t_2],R^n)$ and any $\epsilon>0$ satisfy $t_1<t+\epsilon<t_2$, repeating the same procedures, we get
\begin{align*}
\big\|(\Phi x)(t+\epsilon)-(\Phi x)(t) \big\|&\leq\frac{(1+\|I-\rho A\|)\varpi_1+\rho\|b\|+\|\psi\|}{\Gamma(\alpha)} \left(\frac{\varepsilon^{\alpha}}{\alpha}-\frac{(t+\varepsilon)^{\alpha}}{\alpha}+\frac{t^{\alpha}}{\alpha}\right)\notag\\
     & \quad     +l_2\|x((t+\epsilon)_1^{-})-x(t_1^{-})\|+\frac{(1+\|I-\rho A\|)\varpi_1+\rho\|b\|+\|\psi\|}{\Gamma(\alpha+1)}\varepsilon^{\alpha}.
\end{align*}
It is easy to see that the right-hand of the above inequality tends to zero as $\epsilon\rightarrow 0$. Thus, $\Phi x\in C((t_1,t_2],R^n).$

If $t\in(t_k,t_{k+1}], k=1,2,\cdots, m, $ for every $x\in C((t_k,t_{k+1}], R^n)$ and any $\epsilon>0$ satisfy $t_k<t+\epsilon<t_{k+1}$, using the same process again, we get $\Phi x\in C((t_k,t_{k+1}], R^n).$

Step 2. $\Phi$ is a contraction operator on $PC(J, R^n)$.

If $t\in[0,t_1]$, for $x,y\in PC(J,R^n)$, by Lemma \ref{lemma2.3},we have
\begin{align*}
\big\|(\Phi x)(t)-(\Phi y)(t)\big\|&\leq\frac{1}{\Gamma(\alpha)} \int_{0}^{t} (t-s)^{\alpha-1}\big\|P_{K}(x(s) -\rho Ax(s)-\rho b)-P_{K}(y(s) -\rho Ay(s)-\rho b)  \big\|ds\notag\\
     & \quad +\frac{1}{\Gamma(\alpha)} \int_{0}^{t} (t-s)^{\alpha-1}\|x(s)-y(s)\|ds\notag\\
     &\leq\frac{1}{\Gamma(\alpha)} \int_{0}^{t} (t-s)^{\alpha-1}\big\|\left(x(s) -\rho Ax(s)-\rho b\right)-\left(y(s) -\rho Ay(s)-\rho b\right) \big \|ds\notag\\
     & \quad+\frac{1}{\Gamma(\alpha)} \int_{0}^{t} (t-s)^{\alpha-1}\|x(s)-y(s)\|ds\notag\\
     &\leq \frac{1+\|I-\rho A\|}{\Gamma(\alpha)} \int_{0}^{t} (t-s)^{\alpha-1}\|x(s)-y(s)\|ds\notag\\
     &\leq \frac{1+\|I-\rho A\|}{\Gamma(\alpha)}\|x-y)\|_{PC}\int_{0}^{t} (t-s)^{\alpha-1}ds\notag\\
     &\leq\left(\frac{1+\|I-\rho A\|}{\Gamma(\alpha+1)}T^{\alpha}\right)\|x-y\|_{PC}.
\end{align*}

If $t\in(t_1,t_2]$,  for $x,y\in PC(J,R^n)$, we have
\begin{align*}
\big\|(\Phi x)(t)-(\Phi y)(t)\big\|&\leq\frac{1}{\Gamma(\alpha)} \int_{0}^{t} (t-s)^{\alpha-1}\big\|P_{K}(x(s) -\rho Ax(s)-\rho b)-P_{K}(y(s) -\rho Ay(s)-\rho b)  \big\|ds\notag\\
     & \quad +\|U_{1}(x(t_1^-))-U_{1}(y(t_1^-))\|+\frac{1}{\Gamma(\alpha)} \int_{0}^{t} (t-s)^{\alpha-1}\|x(s)-y(s)\|ds\notag\\
     &\leq\frac{1}{\Gamma(\alpha)} \int_{0}^{t} (t-s)^{\alpha-1}\big\|\left(x(s) -\rho Ax(s)-\rho b\right)-\left(y(s) -\rho Ay(s)-\rho b\right) \big \|ds\notag\\
     & \quad +\|U_{1}(x(t_1^-))-U_{2}(y(t_2^-))\|+\frac{1}{\Gamma(\alpha)} \int_{0}^{t} (t-s)^{\alpha-1}\|x(s)-y(s)\|ds\notag\\
     &\leq l_2\|x(t_1^-)-y(t_1^-)\|+\frac{1+\|I-\rho A\|}{\Gamma(\alpha)} \int_{0}^{t} (t-s)^{\alpha-1}\|x(s)-y(s)\|ds\notag\\
     &\leq  l_2\|x-y\|_{PC}+\frac{1+\|I-\rho A\|}{\Gamma(\alpha)}\|x-y)\|_{PC}\int_{0}^{t} (t-s)^{\alpha-1}ds\notag\\
     &\leq\left(l_2+\frac{1+\|I-\rho A\|}{\Gamma(\alpha+1)}T^{\alpha}\right)\|x-y\|_{PC}.
\end{align*}

If $t\in(t_k,t_{k+1}],k=3,4,\cdots,m$, using the same argument, we have
\begin{align}\label{eq3.7}
\big\|(\Phi x)(t)-(\Phi y)(t)\big\|&\leq\frac{1}{\Gamma(\alpha)} \int_{0}^{t} (t-s)^{\alpha-1}\big\|P_{K}(x(s) -\rho Ax(s)-\rho b)-P_{K}(y(s) -\rho Ay(s)-\rho b)  \big\|ds\notag\\
     & \quad +\sum \limits_{j=1}^{m}\|U_{j}(x(t_j^-))-U_{j}(y(t_j^-))\|+\frac{1}{\Gamma(\alpha)} \int_{0}^{t} (t-s)^{\alpha-1}\|x(s)-y(s)\|ds\notag\\
     &\leq\frac{1}{\Gamma(\alpha)} \int_{0}^{t} (t-s)^{\alpha-1}\big\|\left(x(s) -\rho Ax(s)-\rho b\right)-\left(y(s) -\rho Ay(s)-\rho b\right) \big \|ds\notag\\
     & \quad +\sum \limits_{j=1}^{m}\|U_{j}(x(t_j^-))-U_{j}(y(t_j^-))\|+\frac{1}{\Gamma(\alpha)} \int_{0}^{t} (t-s)^{\alpha-1}\|x(s)-y(s)\|ds\notag\\
     &\leq l_2\sum \limits_{j=1}^{m}\|x(t_j^-)-y(t_j^-)\|+\frac{1+\|I-\rho A\|}{\Gamma(\alpha)} \int_{0}^{t} (t-s)^{\alpha-1}\|x(s)-y(s)\|ds\notag\\
     &\leq m l_2\|x-y\|_{PC}+\frac{1+\|I-\rho A\|}{\Gamma(\alpha)}\|x-y\|_{PC}\int_{0}^{t} (t-s)^{\alpha-1}ds\notag\\
     &\leq\left(m l_2+\frac{1+\|I-\rho A\|}{\Gamma(\alpha+1)}T^{\alpha}\right)\|x-y\|_{PC}\notag\\
     &\leq \kappa_{2}\|x-y\|_{PC},
\end{align}
where
$$
\kappa_2=m l_2+\frac{1+\|I-\rho A\|}{\Gamma(\alpha+1)}T^{\alpha}.
$$
Thus, for $t\in[0,T]$, $x,y\in PC(J,R^n)$, one has
$$\|\Phi x-\Phi y\|_{PC}=\sup_{t\in J}\big\|(\Phi x)(t)-(\Phi y)(t)\big\|\leq\kappa_2 \|x-y\|_{PC}$$
and so $\Phi$ is a contraction operator on $PC(J,R^n)$ due to $0<\kappa_2<1$.  Now the Banach fixed point theorem shows that the operation $\Phi$ has a unique fixed point on $PC(J,R^n)$. This means that system \eqref{eq2.4} has a unique solution.
\end{proof}

\begin{theorem}\label{th3.3}
Under the assumption [H], the set of all solutions of fractional-order neural networks (\ref{eq2.4}) is bounded.
\end{theorem}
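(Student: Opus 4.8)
The plan is to exhibit a single constant $\Xi>0$, depending only on the data $x_0,\rho,A,b,\alpha,T,m,l_1$ and on the point $\psi\in K$ used in (\ref{eq3.5}), such that \emph{every} solution $x\in PC(J,R^n)$ of (\ref{eq2.4}) satisfies $\|x\|_{PC}\le\Xi$; this is precisely the assertion that the solution set is bounded. The ingredients are the integral representation of Theorem \ref{th3.1}, the non-expansiveness estimate (\ref{eq3.5}), the impulsive bound $\|U_k(\cdot)\|\le l_1$ from $[H]$, and the generalized Gronwall inequality of Lemma \ref{lemma2.4}.

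First I would take an arbitrary solution $x$ and fix $t\in(t_k,t_{k+1}]$ for some $k\in\{0,1,\dots,m\}$. Taking norms in (\ref{eq3.1}), bounding the accumulated jumps by $\sum_{j=1}^{k}\|U_j(x(t_j^-))\|\le ml_1$ via $[H]$, using (\ref{eq3.5}) to bound $\|P_K(x(s)-\rho Ax(s)-\rho b)\|\le\|I-\rho A\|\,\|x(s)\|+\|\rho b\|+\|\psi\|$, and using $\frac{1}{\Gamma(\alpha)}\int_0^t(t-s)^{\alpha-1}ds=\frac{t^\alpha}{\Gamma(\alpha+1)}\le\frac{T^\alpha}{\Gamma(\alpha+1)}$, one reaches an inequality of the form
$$\|x(t)\|\le c_0+\frac{1+\|I-\rho A\|}{\Gamma(\alpha)}\int_0^t(t-s)^{\alpha-1}\|x(s)\|\,ds,\qquad t\in J,$$
with $c_0:=\|x_0\|+ml_1+\frac{(\|\rho b\|+\|\psi\|)T^\alpha}{\Gamma(\alpha+1)}$ a constant not depending on $x$. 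The crucial observation is that this one inequality holds on all of $J$, since the representation (\ref{eq3.1}) already incorporates every impulse through the term $\sum_{j=1}^{k}U_j(x(t_j^-))$ (and $ml_1\ge kl_1$ for every admissible $k$), while $\|x(\cdot)\|$ is nonnegative and locally integrable on $J$ because $x\in PC(J,R^n)$.

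Next I would apply Lemma \ref{lemma2.4} to this integral inequality with $a(t)\equiv c_0$ (nonnegative and nondecreasing) and $b(t)\equiv\frac{1+\|I-\rho A\|}{\Gamma(\alpha)}$ (nonnegative, nondecreasing, continuous, hence bounded). Since $b(t)\Gamma(\alpha)t^\alpha=(1+\|I-\rho A\|)t^\alpha$, this yields
$$\|x(t)\|\le c_0\,E_\alpha\big((1+\|I-\rho A\|)t^\alpha\big)\le c_0\,E_\alpha\big((1+\|I-\rho A\|)T^\alpha\big)=:\Xi,\qquad t\in J,$$
where we used that $E_\alpha$ is increasing on $[0,\infty)$ because all its Taylor coefficients are positive. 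Taking the supremum over $t\in J$ gives $\|x\|_{PC}\le\Xi$, and since $\Xi$ is independent of the particular solution, the set of all solutions of (\ref{eq2.4}) is bounded.

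The points requiring the most care — rather than a genuine obstacle, since the argument is short — are: (i) checking that the single constant $c_0$, in particular the uniform bound $ml_1$ for the accumulated jumps, is valid for every $k=0,1,\dots,m$, so that the Gronwall estimate can be run across the impulse instants on the whole interval; and (ii) observing that Lemma \ref{lemma2.4}, stated for an integral equation, applies verbatim to the corresponding integral \emph{inequality} (by the usual comparison with the solution of the associated equation). Everything else reduces to the norm estimates already displayed in the proofs of the preceding existence theorems.
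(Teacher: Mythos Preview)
Your proposal is correct and follows essentially the same route as the paper: derive a single integral inequality on all of $J$ from the representation (\ref{eq3.1}), the non-expansiveness estimate (\ref{eq3.5}), and the impulsive bound $ml_1$, then apply the generalized Gronwall inequality (Lemma \ref{lemma2.4}). The only cosmetic difference is that the paper keeps the $t$-dependent term $a(t)=\|x_0\|+ml_1+\frac{(\rho\|b\|+\|\psi\|)t^{\alpha}}{\Gamma(\alpha+1)}$ rather than majorizing it by your constant $c_0$; your explicit remark that Lemma \ref{lemma2.4} must be read as an integral \emph{inequality} is a useful clarification the paper leaves implicit.
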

\begin{proof}
If $t\in[0,t_1]$, it follows from (\ref{eq3.1}), (\ref{eq3.5}) that
\begin{align*}
\|x(t)\|&\leq \|x_0\|+\frac{(1+\|I-\rho A\|)}{\Gamma(\alpha)} \int_{0}^{t} (t-s)^{\alpha-1}\|x(s)\|ds+\frac{\rho\|b\|+\|\psi\|}{\Gamma(\alpha)}\int_{0}^{t} (t-s)^{\alpha-1}ds\notag\\
     &\leq \|x_0\|+\frac{(\rho\|b\|+\|\psi\|)t^{\alpha}}{\Gamma(\alpha+1)}+\frac{(1+\|I_n-\rho A\|)}{\Gamma(\alpha)} \int_{0}^{t} (t-s)^{\alpha-1}\|x(s)\|ds.
\end{align*}

If $t\in(t_1,t_2]$, then we have
\begin{align*}
\|x(t)\|&\leq \|x_0\|+\|U_{1}(x(t_1^{-}))\|+\frac{(1+\|I-\rho A\|)}{\Gamma(\alpha)} \int_{0}^{t} (t-s)^{\alpha-1}\|x(s)\|ds+\frac{\rho\|b\|+\|\psi\|}{\Gamma(\alpha)}\int_{0}^{t} (t-s)^{\alpha-1}ds\notag\\
     &\leq \|x_0\|+l_1+\frac{(\rho\|b\|+\|\psi\|)t^{\alpha}}{\Gamma(\alpha+1)}+\frac{(1+\|I_n-\rho A\|)}{\Gamma(\alpha)} \int_{0}^{t} (t-s)^{\alpha-1}\|x(s)\|ds.
\end{align*}

If $t\in(t_k,t_{k+1}],k=3,4,\cdots,m$, arguing as the above, we have
\begin{align*}
\|x(t)\|&\leq \|x_0\|+\sum \limits_{j=1}^{m}\|U_{j}(x(t_j^{-}))\|+\frac{(1+\|I-\rho A\|)}{\Gamma(\alpha)} \int_{0}^{t} (t-s)^{\alpha-1}\|x(s)\|ds+\frac{\rho\|b\|+\|\psi\|}{\Gamma(\alpha)}\int_{0}^{t} (t-s)^{\alpha-1}ds\notag\\
     &\leq \|x_0\|+ml_1+\frac{(\rho\|b\|+\|\psi\|)t^{\alpha}}{\Gamma(\alpha+1)}+\frac{(1+\|I_n-\rho A\|)}{\Gamma(\alpha)} \int_{0}^{t} (t-s)^{\alpha-1}\|x(s)\|ds.
\end{align*}

Thus, for $t\in[0,T]$, one has
$$\|x(t)\|\leq\|x_0\|+ml_1+\frac{(\rho\|b\|+\|\psi\|)t^{\alpha}}{\Gamma(\alpha+1)}+\frac{(1+\|I_n-\rho A\|)}{\Gamma(\alpha)} \int_{0}^{t} (t-s)^{\alpha-1}\|x(s)\|ds$$
Let
$$a(t)=\|x_0\|+ml_1+\frac{(\rho\|b\|+\|\psi\|)t^{\alpha}}{\Gamma(\alpha+1)}, \quad b(t)=\frac{(1+\|I_n-\rho A\|)}{\Gamma(\alpha)}.$$
Obviously, $a(t)$ is a nonnegative and nondecreasing function. It follows from Lemma \ref{lemma2.5} that
\begin{align*}
\|x(t)\|&\leq a(t)E_\alpha(b(t)\Gamma(\alpha)t^\alpha)\notag\\
&\leq \left(\|x_0\|+ml_1+\frac{(\rho\|b\|+\|\psi\|)t^{\alpha}}{\Gamma(\alpha+1)}\right)E_\alpha\big(b(t)\Gamma(\alpha)t^\alpha\big).
\end{align*}
Thus, the set of all solutions of system (\ref{eq2.4}) is bounded. This completed the proof.
\end{proof}

\section{The Mittage-Leffler Stability of the equilibrium point }
\noindent
\setcounter{equation}{0}
In this section, we will show that the equilibrium point of the neural networks (\ref{eq2.4}) is globally Mittag-Leffler stable under some mild conditions.
\begin{theorem}\label{th4.1}
Assume that the impulse operator $U_k(x(t_k))$ satisfies
$$U_k(x(t_k))=-\sigma(x(t_k)-x^*),\quad k=1,2,\cdots,m.$$
Moreover, suppose that there exist a symmetric and positive definite matrix $Q > 0$ and positive scalars $ \rho_1>0$, $0<\eta_1\leq1$ such that
\begin{equation}\label{eq4.1}
  \lambda_{min}\left(-Q^{-\frac{1}{2}}\left(-2Q+\rho_1^{-1}Q^2+\rho_1(I-\rho A)^{T}(I-\rho A)\right) Q^{-\frac{1}{2}}\right)>0
   \end{equation}
and
\begin{equation}\label{eq4.2}
   Q^{-\frac{1}{2}}(I-\sigma)^{T}Q(I-\sigma) Q^{-\frac{1}{2}}-\eta_1 I\leq 0.
\end{equation}
Then the neural networks (\ref{eq2.4}) is globally Mittag-Leffler stable.
\end{theorem}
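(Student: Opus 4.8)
The plan is to verify the three hypotheses of Lemma~\ref{lemma2.5} with the candidate Lyapunov function $V(t,x) = (x-x^*)^T Q (x-x^*)$, applied to the shifted system for $y(t) = x(t) - x^*$. First I would observe that, since $x^*$ satisfies $-x^* + P_K(x^* - \rho A x^* - \rho b) = 0$, the equation for $y(t)$ reads
\begin{equation*}
\leftidx{_0^C}D{_t^\alpha}y(t) = -y(t) + P_K(x(t) - \rho A x(t) - \rho b) - P_K(x^* - \rho A x^* - \rho b), \quad t \in J',
\end{equation*}
with jumps $\triangle y(t_k) = U_k(x(t_k)) = -\sigma y(t_k)$, so $y + \Upsilon_k(y) = (I-\sigma)y$; I would note the map $y \mapsto (I-\sigma)y$ plays the role of $\varphi$ in Lemma~\ref{lemma2.5}. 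The bounds $\gamma_1\|y\|^2 \le V(t,y) \le \gamma_2\|y\|^2$ hold with $\gamma_1 = \lambda_{\min}(Q)$, $\gamma_2 = \lambda_{\max}(Q)$ and $p=2$, $q=1$.

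The core computation is the fractional-derivative estimate. Using Lemma~\ref{lemma2.1},
\begin{equation*}
\tfrac12 \leftidx{_0^C}D{_t^\alpha} V(t,y(t)) \le y^T(t) Q \leftidx{_0^C}D{_t^\alpha} y(t) = -y^T Q y + y^T Q\bigl(P_K(x - \rho Ax - \rho b) - P_K(x^* - \rho A x^* - \rho b)\bigr).
\end{equation*}
For the cross term I would apply Lemma~\ref{lemma2.2} with $\vartheta = y$ and $\upsilon = P_K(\cdot) - P_K(\cdot)$ to get an upper bound $\tfrac{\rho_1^{-1}}{2} y^T Q Q^T y + \tfrac{\rho_1}{2}\upsilon^T\upsilon$, and then use non-expansiveness of $P_K$ (Lemma~\ref{lemma2.3}) to bound $\|\upsilon\| \le \|(I-\rho A)(x - x^*)\| = \|(I-\rho A)y\|$, so $\upsilon^T\upsilon \le y^T (I-\rho A)^T(I-\rho A) y$. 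Collecting terms,
\begin{equation*}
\leftidx{_0^C}D{_t^\alpha} V \le y^T\bigl(-2Q + \rho_1^{-1}Q^2 + \rho_1 (I-\rho A)^T(I-\rho A)\bigr) y.
\end{equation*}
Writing $y = Q^{-1/2}z$ turns the right side into $z^T Q^{-1/2}(-2Q + \rho_1^{-1}Q^2 + \rho_1(I-\rho A)^T(I-\rho A))Q^{-1/2} z$, which by \eqref{eq4.1} is $\le -\gamma_3 \|z\|^2$ with $\gamma_3 = \lambda_{\min}(-Q^{-1/2}(\cdots)Q^{-1/2}) > 0$; since $z^Tz = y^T Q y = V$, this gives $\leftidx{_0^C}D{_t^\alpha} V \le -\gamma_3 V$, the second hypothesis.

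For the jump condition, $V(t_k^+, (I-\sigma)y) = y^T (I-\sigma)^T Q (I-\sigma) y = z^T Q^{-1/2}(I-\sigma)^T Q(I-\sigma)Q^{-1/2} z \le \eta_1 z^T z = \eta_1 V$ by \eqref{eq4.2}, so $\gamma_4 = \eta_1 \in (0,1]$ works. I would also check the monotonicity requirement on $\varphi$: this is where I anticipate the main friction, since Lemma~\ref{lemma2.5} asks that $\varphi(y) = (I-\sigma)y \ge 0$ be nondecreasing, and for a general matrix $\sigma$ this is a genuine structural constraint (it forces, e.g., $I - \sigma$ to have nonnegative entries and be componentwise monotone) rather than something automatic; I would either impose it as part of the reading of the hypotheses or argue componentwise that $\sigma$ is diagonal with entries in a suitable range. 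Once all three bullet conditions and the monotonicity of $\varphi$ are in place, Lemma~\ref{lemma2.5} yields global Mittag-Leffler stability of $y \equiv 0$, i.e. of the equilibrium $x^*$ of \eqref{eq2.4}, completing the proof.
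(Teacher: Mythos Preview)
Your argument is correct and follows the paper's proof essentially step for step: the same shift $y=x-x^*$, the same Lyapunov function $V=y^TQy$, the same chain Lemma~\ref{lemma2.1} $\to$ Lemma~\ref{lemma2.2} $\to$ Lemma~\ref{lemma2.3} to reach $y^T\Pi y$ with $\Pi=-2Q+\rho_1^{-1}Q^2+\rho_1(I-\rho A)^T(I-\rho A)$, and the same $Q^{1/2}$-conjugation (your substitution $z=Q^{1/2}y$ is exactly the paper's rewriting $y^TQ^{1/2}(Q^{-1/2}\Pi Q^{-1/2})Q^{1/2}y$) to extract $-\xi_1 V$; the jump estimate is identical. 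The monotonicity/nonnegativity hypothesis on $\varphi$ that you flag is not addressed in the paper's proof either---it simply invokes Lemma~\ref{lemma2.5} once the three bulleted inequalities are in hand---so your caution there is warranted but does not reflect a divergence from the paper's approach.
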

\begin{proof}
Assume that $x^*$ be the equilibrium point of the neural networks (\ref{eq2.4}) with initial value $x(0)=x_0$. Let $y(t)=x(t)-x^*$. Then the neural networks (\ref{eq2.4}) is transformed into
the follwing system:
\begin{equation} \label{eq4.3}
\left\{
\begin{array}{l}
\leftidx{_0^C}D{_t^\alpha}y(t)=-y(t)+\tilde{P}_{K}(y(t)),\ t\in J', \\
\triangle y(t_k) =y(t_k^{+})-y(t_k^{-})=-\sigma (y(t_k)),\\
y(0)=y_0,k=1,2,\cdots,m,
\end{array}
\right.
\end{equation}
where
$$\tilde{P}_{K}(y(t))=P_{K}[y(t)-\rho Ay(t)+x^* -\rho Ax^*-\rho b]-P_{K}[x^* -\rho Ax^*-\rho b], y(0)=x_0-x^*.$$

Clearly, the equilibrium point $x^{*}$  of (\ref{eq2.4}) is globally Mittag-Leffler stable if and only if the zero solution of system (\ref{eq4.3}) is
globally Mittag-Leffler stable.

Consider a Lyapunov function candidate:
$$V(t)=y^{T}(t)Qy(t).$$

When $t\neq t_k$, by calculating the $\alpha$-order Caputo derivatives of $V(t)$ along the trajectories of system (\ref{eq4.3}), we can obtain from Lemmas \ref{lemma2.1}, Lemmas \ref{lemma2.2} and Lemmas \ref{lemma2.3} that
\begin{align}\label{eq4.4}
\leftidx{_{ 0}^C}D{_t^{\alpha}}V(t)&=\leftidx{_{ 0}^C}D{_t^{\alpha}}\big(y^{T}(t)Qy(t)\big)\notag\\
                                   &\leq 2y^{T}(t)Q \left(\leftidx{_{ 0}^C}D{_t^{\alpha}}y(t)\right)\notag\\
                                   &=2y^{T}(t)Q\left(-y(t)+\tilde{P}_{K}(y(t))\right)\notag\\
                                   &=y^{T}(t)(-2Q)y(t)+2y^{T}(t)Q\tilde{P}_{K}(y(t))\notag\\
                                   &\leq y^{T}(t)(-2Q)y(t)+\rho_1^{-1}y^{T}(t)Q^2y(t)+\rho_1\tilde{P}_{K}^{T}(y(t))\tilde{P}_{K}(y(t))\notag\\
                                   &=y^{T}(t)(-2Q)y(t)+\rho_1^{-1}y^{T}(t)Q^2y(t)+\rho_1\|\tilde{P}_{K}(y(t))\|^2\notag\\
                                   &\leq y^{T}(t)(-2Q)y(t)+\rho_1^{-1}y^{T}(t)Q^2y(t)+\rho_1\|y(t)-\rho Ay(t)\|^2\notag\\
                                   &=y^{T}(t)(-2Q)y(t)+\rho_1^{-1}y^{T}(t)Q^2y(t)+\rho_1\left(y(t)-\rho Ay(t)\right)^{T}\left(y(t)-\rho Ay(t)\right)\notag\\
                                   &=y^{T}(t)(-2Q)y(t)+\rho_1^{-1}y^{T}(t)Q^2y(t)+\rho_1y^{T}(t)(I-\rho A)^{T}(I-\rho A)y(t)\notag\\
                                   &=y^{T}(t)\left(-2Q+\rho_1^{-1}Q^2+\rho_1(I-\rho A)^{T}(I-\rho A)\right)y(t)\notag\\
                                   &=y^{T}(t)Q^{\frac{1}{2}}\left(Q^{-\frac{1}{2}}\Pi Q^{-\frac{1}{2}}\right)Q^{\frac{1}{2}}y(t)\notag\\
                                   &=-y^{T}(t)Q^{\frac{1}{2}}\left(-Q^{-\frac{1}{2}}\Pi Q^{-\frac{1}{2}}\right)Q^{\frac{1}{2}}y(t),
\end{align}
where
$$\Pi=-2Q+\rho_1^{-1}Q^2+\rho_1(I-\rho A)^{T}(I-\rho A).$$

Let $\xi_1=\lambda_{min}\left(-Q^{-\frac{1}{2}}\Pi Q^{-\frac{1}{2}}\right).$ By using $\lambda_{min}(Q)\|y(t)\|^2\leq y^{T}(t)Qy(t)\leq \lambda_{max}(Q)\|y(t)\|^2$ and (\ref{eq4.1}), we can obtain
$$\leftidx{_{ 0}^C}D{_t^{\alpha}}V(t)\leq -y^{T}(t)Q^{\frac{1}{2}}\left(-Q^{-\frac{1}{2}}\Pi Q^{-\frac{1}{2}}\right)Q^{\frac{1}{2}}y(t)
                                     \leq -\lambda_{min}\left(-Q^{-\frac{1}{2}}\Pi Q^{-\frac{1}{2}}\right)y^{T}(t)Qy(t)=-\xi_1 V(t).$$
When $t=t_k$, it follows from ( \ref{eq4.2} ) that
\begin{align*}
V(t_k^{+})&=y^{T}(t_k^{+})Qy(t_k^{+})\notag\\
    &=\left(y(t_k^{-})-\sigma y(t_k^{-})\right)^{T}Q\left(y(t_k^{-})-\sigma y(t_k^{-})\right)\notag\\
    &=y^{T}(t_k)(I_n-\sigma)^{T}Q(I_n-\sigma)y(t_k)\notag\\
    &=y^{T}(t_k)Q^{\frac{1}{2}}\left(Q^{-\frac{1}{2}}(I-\sigma)^{T}Q(I-\sigma) Q^{-\frac{1}{2}}\right)Q^{\frac{1}{2}}y(t_k)\notag\\
    &\leq \eta_1y^{T}(t_k)Qy(t_k)\notag\\
    &=\eta_1 V(t_k).
\end{align*}
Thus, by applying Lemma \ref{lemma2.5}, we can deduce that the impulsive system (\ref{eq2.4}) is globally Mittag-Leffler stable.
\end{proof}

\begin{theorem}\label{th4.2}
Assume that the impulsive operator $U_k(x(t_k))$ satisfies
$$U_k(x(t_k))=-\sigma(x(t_k)-x^*),k=1,2,\cdots,m.$$
If there exist symmetric and positive definite matrix $Q > 0$, and positive scalars $ \rho_2>0, \mu_2>0$ and $0<\eta_2\leq1$, such that the matrix $Q > 0$ and the impulse matrix $\sigma$ satisfy the following inequalities
\begin{equation}\label{eq4.5}
  -2Q+\rho_2^{-1}Q^2+\rho_2(I-\rho A)^{T}(I-\rho A)+\mu_2Q \leq 0,
   \end{equation}
and
\begin{equation}\label{eq4.6}
   (I-\sigma)^{T}Q(I-\sigma)-\eta_2Q\leq 0,
\end{equation}
then the neural networks (\ref{eq2.4}) is globally Mittag-Leffler stable.
\end{theorem}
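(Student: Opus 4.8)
The plan is to proceed exactly as in the proof of Theorem \ref{th4.1}: shift the equilibrium to the origin and then verify the three hypotheses of Lemma \ref{lemma2.5} for the quadratic Lyapunov function $V(t)=y^{T}(t)Qy(t)$, the sole difference being that the congruence-transformed eigenvalue conditions (\ref{eq4.1})--(\ref{eq4.2}) are now replaced by the direct matrix inequalities (\ref{eq4.5})--(\ref{eq4.6}). First I would set $y(t)=x(t)-x^{*}$; since $U_k(x(t_k))=-\sigma(x(t_k)-x^{*})$, this reduces the networks (\ref{eq2.4}) to the same impulsive system (\ref{eq4.3}) considered in the proof of Theorem \ref{th4.1}, with the shifted projection term $\tilde{P}_{K}$, so that the global Mittag-Leffler stability of $x^{*}$ is equivalent to that of the zero solution of (\ref{eq4.3}). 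I would also record that $V\in V_0$ and that $\lambda_{\min}(Q)\|y(t)\|^{2}\le V(t)\le\lambda_{\max}(Q)\|y(t)\|^{2}$, which gives the first requirement of Lemma \ref{lemma2.5} with $p=2$, $q=1$, $\gamma_1=\lambda_{\min}(Q)$ and $\gamma_2=\lambda_{\max}(Q)$.

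For $t\neq t_k$ I would reuse verbatim the chain of estimates leading to (\ref{eq4.4}): Lemma \ref{lemma2.1} gives $\leftidx{_{0}^C}D{_t^{\alpha}}V(t)\le 2y^{T}(t)Q\big(-y(t)+\tilde{P}_{K}(y(t))\big)$; Lemma \ref{lemma2.2} with the scalar $\rho_2$ splits the cross term as $2y^{T}(t)Q\tilde{P}_{K}(y(t))\le\rho_2^{-1}y^{T}(t)Q^{2}y(t)+\rho_2\|\tilde{P}_{K}(y(t))\|^{2}$; and the nonexpansiveness of $P_K$ from Lemma \ref{lemma2.3} bounds $\|\tilde{P}_{K}(y(t))\|\le\|(I-\rho A)y(t)\|$. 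Collecting terms yields
$$\leftidx{_{0}^C}D{_t^{\alpha}}V(t)\le y^{T}(t)\big(-2Q+\rho_2^{-1}Q^{2}+\rho_2(I-\rho A)^{T}(I-\rho A)\big)y(t).$$
Here lies the only real point of divergence from Theorem \ref{th4.1}: instead of factoring out $Q^{1/2}$ and passing to the minimal eigenvalue of a congruence transform, I would invoke (\ref{eq4.5}) directly, namely $-2Q+\rho_2^{-1}Q^{2}+\rho_2(I-\rho A)^{T}(I-\rho A)\le-\mu_2 Q$, to conclude $\leftidx{_{0}^C}D{_t^{\alpha}}V(t)\le-\mu_2\,y^{T}(t)Qy(t)=-\mu_2 V(t)$, which is the second hypothesis of Lemma \ref{lemma2.5} with $\gamma_3=\mu_2$.

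For the jump condition, at $t=t_k$ one has $y(t_k^{+})=(I-\sigma)y(t_k)$, hence $V(t_k^{+})=y^{T}(t_k)(I-\sigma)^{T}Q(I-\sigma)y(t_k)\le\eta_2\,y^{T}(t_k)Qy(t_k)=\eta_2 V(t_k)$ by (\ref{eq4.6}), which is the third hypothesis with $\gamma_4=\eta_2\in(0,1]$. Since $-y+\tilde{P}_{K}(y)$ and $\Upsilon_k(y)=-\sigma y$ both vanish at $y=0$, all the requirements of Lemma \ref{lemma2.5} hold, and the lemma delivers global Mittag-Leffler stability of the zero solution of (\ref{eq4.3}), hence of $x^{*}$, completing the proof. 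I do not anticipate a genuine obstacle, since the argument is essentially a re-parametrization of Theorem \ref{th4.1}; the only item deserving a line of care is checking that $V$ belongs to the class $V_0$ on each subinterval and that the monotonicity/nonnegativity condition on $\varphi(u)=u+\Upsilon_k(u)=(I-\sigma)u$ required by Lemma \ref{lemma2.5} is consistent with the standing hypotheses, just as was implicitly used in Theorem \ref{th4.1}.
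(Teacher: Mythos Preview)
Your proposal is correct and follows essentially the same route as the paper: the paper also takes $V(t)=y^{T}(t)Qy(t)$, derives the bound $\leftidx{_{0}^C}D{_t^{\alpha}}V(t)\le y^{T}(t)\big(-2Q+\rho_2^{-1}Q^{2}+\rho_2(I-\rho A)^{T}(I-\rho A)\big)y(t)$ via Lemmas \ref{lemma2.1}--\ref{lemma2.3}, adds and subtracts $\mu_2 Q$ to invoke (\ref{eq4.5}) and get $-\mu_2 V(t)$, handles the jump via (\ref{eq4.6}) exactly as you describe, and concludes with Lemma \ref{lemma2.5}. If anything, your write-up is slightly more explicit than the paper's in identifying the constants $\gamma_1,\gamma_2,\gamma_3,\gamma_4,p,q$ needed for Lemma \ref{lemma2.5}.
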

\begin{proof}
Consider a Lyapunov function candidate:
$$V(t)=y^{T}(t)Qy(t).$$

When $t\neq t_k$, by calculating the $\alpha$-order Caputo derivatives of $V(t)$ along the trajectories of system (\ref{eq4.3}), it follows
from Lemmas \ref{lemma2.1}, Lemmas \ref{lemma2.2} and Lemmas \ref{lemma2.3} and (\ref{eq4.5}) that
\begin{align}\label{eq4.4}
\leftidx{_{ 0}^C}D{_t^{\alpha}}V(t)&=\leftidx{_{ 0}^C}D{_t^{\alpha}}\big(y^{T}(t)Qy(t)\big)\notag\\
                                   &\leq 2y^{T}(t)Q \left(\leftidx{_{ 0}^C}D{_t^{\alpha}}y(t)\right)\notag\\
                                   &=2y^{T}(t)Q\left(-y(t)+\tilde{P}_{K}(y(t))\right)\notag\\
                                   &=y^{T}(t)(-2Q)y(t)+2y^{T}(t)Q\tilde{P}_{K}(y(t))\notag\\
                                   &\leq y^{T}(t)\left(-2Q+\rho_2^{-1}Q^2+\rho_2(I-\rho A)^{T}(I-\rho A)\right)y(t)\notag\\
                                   &=y^{T}(t)\left(-2Q+\rho_2^{-1}Q^2+\rho_2(I-\rho A)^{T}(I-\rho A)+\mu_2Q\right)y(t)-\mu_2y^{T}(t)Qy(t)\notag\\
                                   &\leq-\mu_2y^{T}(t)Qy(t)=-\mu_2V(t).
\end{align}
When $t=t_k$, it follows from ( \ref{eq4.6} ) that
\begin{align*}
V(t_k^{+})&=y^{T}(t_k^{+})Qy(t_k^{+})\notag\\
    &=\left(y(t_k^{-})-\sigma y(t_k^{-})\right)^{T}Q\left(y(t_k^{-})-\sigma y(t_k^{-})\right)\notag\\
    &=y^{T}(t_k)(I-\sigma)^{T}Q(I-\sigma)y(t_k)\notag\\
    &=y^{T}(t_k)\left((I-\sigma)^{T}Q(I-\sigma)-\eta_2Q\right)y(t_k)+\eta_2y^{T}(t_k)Qy(t_k)\notag\\
    &\leq \eta_2y^{T}(t_k)Qy(t_k)\notag\\
    &=\eta_2 V(t_k).
\end{align*}
Thus, by applying Lemma \ref{lemma2.5}, we can deduce that the impulsive system (\ref{eq2.4}) is globally Mittag-Leffler stable.
\end{proof}

\begin{remark}\label{rem4.1}
In \cite{WZ,WLH}, authors studied stability of fractional-order projection neural networks without impulse. Unlike the previous results, we have considered the stability of projection neural networks with impulsive effects by utilizing the general quadratic Lyapunov function. Moreover, the stability conditions are established in terms of different kinds of LMIs, which can be applied to check the stability of impulsive fractional-order  projection neural networks with different characteristics.
\end{remark}

\section{Numerical examples}\noindent
\setcounter{equation}{0}
In this section, two numerical examples for fractional-order  projection neural networks with impulses are given to illustrate the validity and feasibility of our main results.

\begin{example}
 Consider a two dimensional FPNNI as follows:
\begin{equation}\label{eq5.1}
\left\{
\begin{array}{l}
\leftidx{_0^C}D{_t^\alpha}x(t)=-x(t)+P_{K}\left(x(t) -\rho Ax(t)-\rho b\right),\ t\in J', \\
\triangle x(t_k) =x(t_k^{+})-x(t_k^{-})=U_{k}(x(t_k))=-\sigma(x(t_k)-x^*),\\
x(0)=x_0,k=1,2,\cdots,m,
\end{array}
\right.
\end{equation}
where
$$K=\{x|-2\leq x_{i} \leq 2,i=1,2 \}, \alpha=0.9,$$
and
$$A=\begin{pmatrix}
7 & -3  \\
-4 & 2
\end{pmatrix},
\quad b=\begin{pmatrix}
1 \\
-1
\end{pmatrix}, \quad \sigma=\begin{pmatrix}
0.5 & 0  \\
0 & 0.25
\end{pmatrix}.
$$
Let $\rho=0.1$, $T=2.5$, $\rho_1=1$, $\eta_1=1$ and $Q=\text diag (1,1).$  Then we obtain
 $$\lambda_{min}\left(-Q^{-\frac{1}{2}}\left(-2Q+\rho_1^{-1}Q^2+\rho_1(I-\rho A)^{T}(I-\rho A)\right) Q^{-\frac{1}{2}}\right)=\lambda_{min}\begin{pmatrix}
0.75 & -0.41  \\
-0.41 & 0.27
\end{pmatrix}=0.0349>0$$
and
 $$Q^{-\frac{1}{2}}(I-\sigma)^{T}Q(I-\sigma) Q^{-\frac{1}{2}}-\eta_1 I=\begin{pmatrix}
-0.75 & 0  \\
0 & -0.4375
\end{pmatrix}\leq 0,$$
which yields that conditions (\ref{eq4.1}) and  (\ref{eq4.2}) of Theorem \ref{th4.1} are satisfied. Therefore, it follows from Theorem \ref{th4.1} that system (\ref{eq5.1}) is globally Mittag-Leffler stable. By the iterative method to find the equilibrium point employed in \cite{ZS}, we can check that the equilibrium point of system (\ref{eq5.1}) is $(0.5,1.5)^\top$. Figure \ref{ex-1} shows the time responses of the variables $x_1(t)$ and $x_2(t)$ of system (\ref{eq5.1}).
\end{example}
\begin{figure}[H]
  \centering
  \includegraphics[width=5.2in]{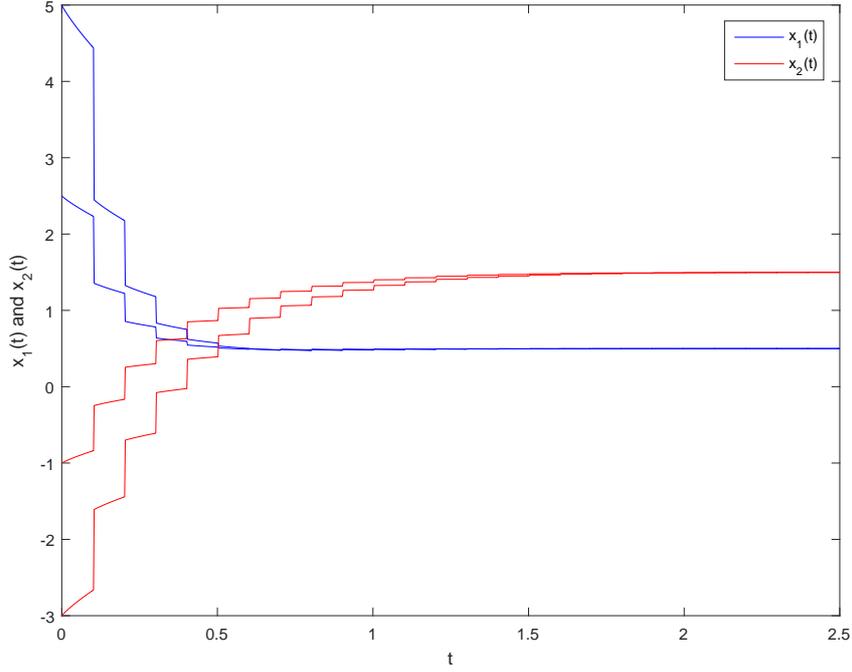}\\
  \caption{Transient states of the solutions $(x_1(t),x_2(t))^\top$ for system (\ref{eq5.1}) with the initial values $x_0=(5.0,-3.0)^\top$, $(2.5,-1.0)^\top$, respectively.}\label{ex-1}
\end{figure}

\noindent {\bf Example 5.2} Consider a two dimensional FPNNI as follows:
\begin{equation}\label{eq5.2}
\left\{
\begin{array}{l}
\leftidx{_0^C}D{_t^\alpha}x(t)=-x(t)+P_{K}\left(x(t) -\rho Ax(t)-\rho b\right),\ t\in J', \\
\triangle x(t_k) =x(t_k^{+})-x(t_k^{-})=U_{k}(x(t_k))=-\sigma(x(t_k)-x^*),\\
x(0)=x_0, \; k=1,2,\cdots,m,
\end{array}
\right.
\end{equation}
where
$$K=\{x|-1\leq x_{i} \leq 1,i=1,2 \},\alpha=0.7,$$
 and
$$A=\begin{pmatrix}
6 & -2  \\
-4 & 3
\end{pmatrix},
\quad b=\begin{pmatrix}
1 \\
0
\end{pmatrix}, \quad \sigma=\begin{pmatrix}
0.3 & 0  \\
0 & 0.3
\end{pmatrix}.
$$
Let $\rho=0.1$, $T=1.5$, $\rho_2=1$ and $\mu_2=0.1$. Then, by employing the Matlab LMI Toolbox, we can verify that LMI (\ref{eq4.5}) in Theorem \ref{th4.2} is feasible and the feasible solution is given as follows:
$$Q=\begin{pmatrix}
0.5911 & 0.1035 \\
0.1035 & 0.6515
\end{pmatrix}.$$
Clearly, the condition (\ref{eq4.6}) is satisfied with $\eta_2=1$. Thus, it follows from Theorem \ref{th4.2} that system (\ref{eq5.2}) is globally Mittag-Leffler stable. By the iterative method to find the equilibrium point used in \cite{ZS}, we can obtain that the equilibrium point of system (\ref{eq5.2}) is $(-0.3,-0.4)^\top$. Figure \ref{ex-2} shows the time responses of the variables $x_1(t)$ and $x_2(t)$ of system (\ref{eq5.2}).
\begin{figure}[H]
  \centering
  \includegraphics[width=5.2in]{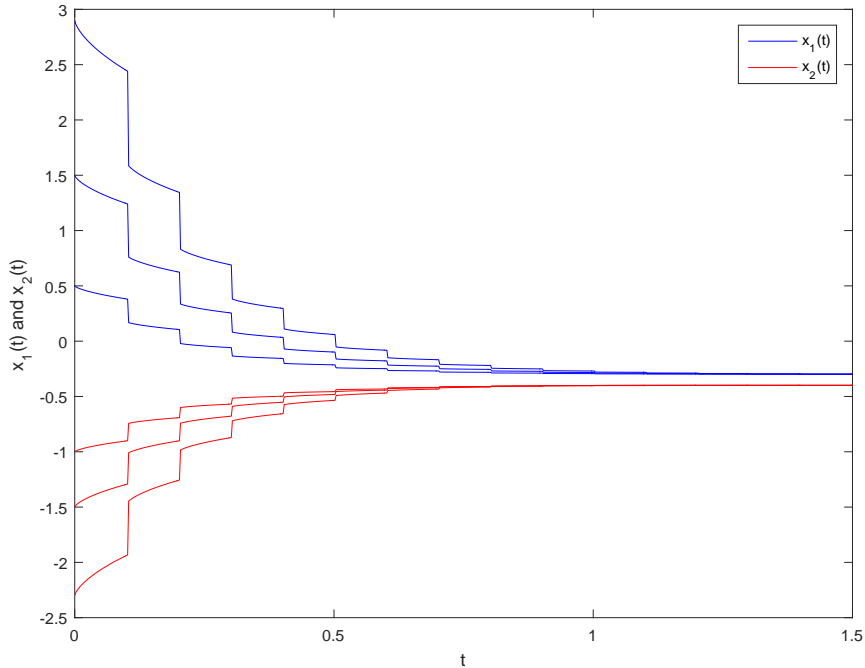}\\
  \caption{Transient states of the solutions $(x_1(t),x_2(t))^\top$ for system (\ref{eq5.2}) with the initial values $x_0=(2.9,-2.3)^\top$, $(1.5,-1.5)^\top$, $(0.5,-1.0)^\top$, respectively.}\label{ex-2}
\end{figure}

\section{Conclusions}\noindent
\setcounter{equation}{0}

This paper focused on the study of a new class of FPNNI under some mild conditions.  The main contributions of this paper are as follows: (i) a new class of FPNNI are introduced which capture the desired features of both the variational inequality and the fractional-order impulsive dynamical systems within the same framework;  (ii) some properties of the solutions set of FPNNI are obtained under different conditions; (iii) some sufficient conditions are given for ensuring the global Mittag-Leffler stability of the equilibrium point of FPNNI by utilizing a general quadratic Lyapunov function.

It is well known that the presence of delay often leads to the instability and oscillation of neural networks. Therefore, it is significant to study the stability of neural networks with delays. As far as we know, there are few papers on studying the stability of the fractional-order projection neural networks with both impulses and delays. Thus, it would be interesting and important to consider the fractional-order impulsive projective neural networks with delays. We leave this for our future research.

\end{document}